\documentclass[10pt]{amsart}
\addtolength\headheight{4pt}
\usepackage{amssymb}
\usepackage{color}
\usepackage{graphicx}
\usepackage{float}
\usepackage{tikz}
\usetikzlibrary{matrix}
\usepackage{epstopdf}

\setlength\parindent{0pt}
\setlength{\oddsidemargin}{5pt}
\setlength{\evensidemargin}{5pt}
\setlength{\textwidth}{440pt}
\setlength{\topmargin}{-50pt}
\setlength{\textheight}{24cm}
\setlength{\parskip}{3.5mm plus0.0mm minus0.5mm}

\newcommand{\SE}{{\mathcal{E}}}
\newcommand{\SD}{{\mathcal{D}}}
\newcommand{\SW}{{\mathcal{W}}}

\newcommand{\Gr}{\operatorname{Gr}}
\newcommand{\V}{\operatorname{\mathcal{V}}}

\newcommand{\PGL}{\operatorname{\mathbb{P}GL}}
\newcommand{\Diff}{\operatorname{Diff}}
\newcommand{\SO}{\operatorname{SO}}
\newcommand{\Cover}{\operatorname{\mathcal{C}over}}
\newcommand{\Maps}{\operatorname{\mathcal{M}aps}}

\newcommand{\trivial}{{\operatorname{trivial}}}

\newcommand{\Engel}{\operatorname{\mathfrak{Engel}}}
\newcommand{\FEngel}{\operatorname{\mathfrak{FEngel}}}

\newcommand{\Cartan}{\operatorname{\mathfrak{Cartan}}}
\newcommand{\FCartan}{\operatorname{\mathfrak{FCartan}}}

\newcommand{\Cont}{\operatorname{\mathcal{C}-\mathcal{S}\mathfrak{trs}}}
\newcommand{\Planes}{\operatorname{\mathcal{P}\mathfrak{lanes}}}
\newcommand{\Bundles}{\operatorname{\mathcal{B}\mathfrak{undles}}}

\newcommand{\SG}{{\mathcal{G}}}

\newcommand{\ST}{{\mathcal{T}}}
\newcommand{\s}{{\mathfrak{s}}}
\newcommand{\SU}{{\mathcal{U}}}

\newcommand{\SL}{\operatorname{\mathcal{L}}}

\newcommand{\Imm}{\operatorname{\mathcal{I}}}
\newcommand{\FImm}{\operatorname{\mathfrak{F}\mathcal{I}}}

\newcommand{\R}{{\mathbb{R}}}
\newcommand{\Z}{{\mathbb{Z}}}
\newcommand{\NS}{{\mathbb{S}}}
\newcommand{\D}{{\mathbb{D}}}

\newcommand{\Op}{{\mathcal{O}p}}

\newtheorem{lemma}{Lemma}
\newtheorem{proposition}{Proposition}
\newtheorem{theorem}{Theorem}
\newtheorem*{theorem*}{Theorem}

\newtheorem{definition}{Definition}

\theoremstyle{definition}

\newtheorem{remark}{Remark}

\newtheorem{example}{Example}

\begin{document} 

\title{On the classification of prolongations up to Engel homotopy}

\subjclass[2010]{Primary: 58A30.}
\date{\today}

\keywords{Engel structure, $h$--principle, Cartan prolongation, Lorentzian prolongation}

\author{\'Alvaro del Pino}
\address{Universidad Aut\'onoma de Madrid and Instituto de Ciencias Matem\'aticas -- CSIC.
C. Nicol\'as Cabrera, 13--15, 28049, Madrid, Spain.}
\email{alvaro.delpino@icmat.es}

\begin{abstract}
In \cite{CPPP} it was shown that Engel structures satisfy an existence $h$--principle, and the question of whether a full $h$--principle holds was left open. In this note we address the classification problem, up to Engel deformation, of Cartan and Lorentz prolongations. We show that it reduces to their formal data as soon as the turning number is large enough.

Somewhat separately, we study the homotopy type of the space of Cartan prolongations, describing completely its connected components in the overtwisted case.
\end{abstract}

\maketitle

\section{Introduction. What can you find in this paper?}

A $2$--plane field in a $4$--manifold is called an \textsl{Engel structure} if it is everywhere maximally non--integrable. These structures conform the one exceptional family in Cartan's list of topologically stable distributions: unlike the other structures in the list -- \emph{line fields}, \textsl{contact structures}, and \textsl{even-contact structures} -- Engel structures are a phenomenon particular to a single dimension, dimension $4$. For a long time, except for a few constructions arising from contact and lorentzian geometry, not much was known about their existence and classification.

An Engel structure induces in its ambient manifold a complete flag satisfying some compatibility conditions; this we call a \textsl{formal Engel structure}. Under orientability assumptions, this yields a parallelization. A first breakthrough came with Vogel's thesis \cite{Vo}, in which he was able to show that any parallelizable manifold admits an Engel structure. Then, in \cite{CPPP} it was shown that the natural inclusion $\Engel \to \FEngel$ of the space of Engel structures into the space of formal Engel structures is a surjection in homotopy groups. Whether an $h$--principle relative to the parameter holds remains an open question. 

This note aims to provide some insight into the classification problem by particularising to the case of \textsl{Cartan} and \textsl{Lorentz prolongations}: the classical examples of Engel structures arising on $\mathbb{S}^1$--bundles as projectivisations of contact and lorentzian manifolds, respectively. 

In Section \ref{sec:prolongations} we describe the homotopy type of the space of Cartan prolongations $\Cartan$. Denote by $\Cartan(\xi)$ the prolongations lifting the contact structure $\xi$; denote by $\Cartan([\xi])$ those prolongations lifting contact structures isotopic to $\xi$. We are able to compute the homotopy groups of $\Cartan([\xi])$; this is particularly simple when $\xi$ is overtwisted (see Theorem \ref{thm:ot}). Klukas and Sahamie had already described the connected components of $\Cartan(\xi)$ in \cite{KS}.

Having understood homotopies through Cartan prolongations, which is a more restrictive case of independent interest, in Section \ref{sec:Engel} we turn to homotopies through Engel structures. Prolongations have a well defined invariant called the \textsl{turning number}. Theorem \ref{thm:main} shows that, as soon as the turning number is large enough, the classification question reduces to the classification as formal Engel structures. The main ingredient is the work of Little \cite{Li} and Saldanha \cite{Sal}.

\textbf{Acknowledgements:} The author would like to thank F. Presas for encouraging him to write this note and for his comments on the proof of Proposition \ref{prop:planeEulerClass}. The author is also grateful to the anonymous referee. The author is supported by the Spanish Research Projects SEV--2015--0554, MTM2013--42135, and MTM2015--72876--EXP and a La Caixa--Severo Ochoa grant.

\section{Preliminaries} \label{sec:preliminaries}

Henceforth all manifolds and distributions considered will be smooth. Unless explicitely stated otherwise, manifolds will be closed. To simplify the discussion, both manifolds and distributions will be orientable and often oriented. Our arguments would carry through taking suitable double or quadruple covers in the non--orientable case. The spaces of maps considered are endowed with the $C^\infty$--topology.

Given two distributions $\eta$ and $\nu$ over the manifold $M$, we write:
\[ [\eta, \nu] = \cup_{p \in M} \{ [u,v]_p \in T_pM \text{ $|$ } u \in \Gamma(\eta), v \in \Gamma(\nu) \} \subset TM \]
for their Lie bracket, which is not necessarily a distribution. Note that $\eta \subset [\eta, \eta]$.

\subsection{Definitions}

\begin{definition}
Let $N$ be a $3$--dimensional manifold. A $2$--dimensional distribution $\xi \subset TN$ is said to be a \emph{contact structure} if it is everywhere non--integrable. That is, $[\xi,\xi] = TN$. 
\end{definition} 

\begin{definition}
Let $M$ be a $4$--dimensional manifold. A $3$--dimensional distribution $\SE \subset TM$ is said to be an \emph{even--contact structure} if it is everywhere non--integrable, i.e. if $[\SE,\SE] = TM$. 
\end{definition}

\begin{definition}
Let $M$ a $4$--dimensional manifold. A $2$--dimensional distribution $\SD \subset TM$ is said to be an \emph{Engel structure} if it is everywhere maximally non--integrable, i.e. if $\SE = [\SD, \SD]$ is an even--contact structure. 
\end{definition}

Before we discuss what the state of the art is regarding Engel structures, let us recall some standard results:

\begin{proposition} \label{prop:str}
Let $M$ be a $4$--dimensional manifold. Let $\SE \subset TM$ be an even--contact structure.
\begin{itemize}
\item There is a uniquely defined line field $\SW \subset \SE$ given by the equation $[\SW,\SE] \subset \SE$. $\SW$ is called the \textsl{kernel} of the even contact structure.
\item Given some Engel structure $\SD \subset TM$ satisfying $\SE = [\SD, \SD]$, it holds that $\SW \subset \SD$.
\item Let $N \subset M$ be a (possibly open) $3$--dimensional submanifold of $M$ that is transverse to $\SW$. Then, $\xi = TN \cap \SE$ is a contact structure in $N$. Additionally, given $\SD$ as above, $X = TN \cap \SD \subset \xi$ is a distinguished legendrian line field. 
\item There is a canonical isomorphism given by Lie bracket:
\begin{equation} \label{eq:C1} 
	\det(\SE/\SW) = TM/\SE.
\end{equation}
Additionally, given $\SD$ as above, there is a second isomorphism:
\begin{equation} \label{eq:C2} 
	\det(\SD) = \SE/\SD. 
\end{equation}
\end{itemize}
\end{proposition}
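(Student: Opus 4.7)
The key observation, used in every part, is that for any distribution $\SF \subset TM$ the Lie bracket of sections, reduced modulo $\SF$, is $C^\infty(M)$--bilinear and skew, so it defines a bundle map $\omega_\SF \colon \SF \otimes \SF \to TM/\SF$. The plan is to apply this construction to $\SF = \SE$ and to $\SF = \SD$, and to read off each bullet from the resulting pointwise linear algebra.

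For (1), the bundle map $\omega_\SE \colon \SE \otimes \SE \to TM/\SE$ is nowhere zero, since this is precisely the content of $[\SE,\SE] = TM$. Fibrewise it is a skew form on a $3$--dimensional vector space valued in a line, hence has a $1$--dimensional kernel; this kernel is the line field $\SW$, and the defining condition $[\SW,\SE] \subset \SE$ is exactly that kernel condition. For (2), the hypothesis $\SE = [\SD,\SD]$ says that $\SD$ is $\omega_\SE$--isotropic. The quotient $\SE/\SW$ carries a fibrewise non--degenerate skew form valued in the line bundle $TM/\SE$, so every isotropic subspace of $\SE/\SW$ is at most $1$--dimensional; thus the rank--$2$ bundle $\SD$ cannot project injectively onto $\SE/\SW$, and must contain $\SW$.

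For (3), transversality of $N$ to $\SW$ yields $TN \cap \SW = 0$ and $TN + \SE = TM|_N$, so $\xi := TN \cap \SE$ has rank $2$ and the projection $TM \to TM/\SE$ induces an isomorphism $TN/\xi \cong TM/\SE$. Sections of $\xi$ extend to vector fields on $M$ tangent to $N$, and their bracket stays tangent to $N$; reducing modulo $\xi$, this bracket corresponds under the isomorphism above to $\omega_\SE|_{\xi \otimes \xi}$, whose radical is $\xi \cap \SW = 0$. Hence $\xi$ is contact. Using $\SW \subset \SD$ from (2), an analogous dimension count shows $X := TN \cap \SD$ is a rank--$1$ subbundle of $\xi$, and any line inside a contact structure is legendrian. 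Finally, for (4), the induced symplectic form $\overline{\omega_\SE}$ on $\SE/\SW$ is a nowhere--zero section of $\Lambda^2(\SE/\SW)^* \otimes (TM/\SE)$, which is the same data as the isomorphism \eqref{eq:C1}; analogously, $\omega_\SD \colon \SD \otimes \SD \to \SE/\SD$ is fibrewise surjective because $[\SD,\SD] = \SE$, and packaging this as a section of $\Lambda^2 \SD^* \otimes (\SE/\SD)$ yields the isomorphism \eqref{eq:C2}. The only delicate point is the bracket comparison in (3): one has to verify that extending sections of $\xi$ from $N$ to vector fields on $M$ does not affect the resulting bracket along $N$, i.e.\ that the construction is independent of the extension.
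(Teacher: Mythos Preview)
The paper does not actually prove Proposition~\ref{prop:str}; it is introduced as one of the ``standard results'' being recalled, and the text moves on immediately after the statement. So there is no paper proof to compare against.

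That said, your argument is the standard one and is correct. The tensoriality of the curvature maps $\omega_\SE$ and $\omega_\SD$ is exactly what makes the pointwise linear algebra go through, and your reasoning for parts (1), (2), and (4) is clean. For part (3), the ``delicate point'' you flag is genuinely the only thing that needs care, and it resolves easily: if $X,Y$ are sections of $\xi$ on $N$, choose extensions $\tilde X,\tilde Y$ to vector fields on $M$ tangent to $N$; any other extensions $\tilde X',\tilde Y'$ to sections of $\SE$ differ from these by vector fields vanishing along $N$, and a short Leibniz computation shows that $[\tilde X',\tilde Y']|_N = [\tilde X,\tilde Y]|_N$ since $\tilde Y'|_N \in TN$ annihilates functions vanishing on $N$. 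Alternatively, and perhaps more in the spirit of the paper's later arguments, one can extend $X,Y$ by pushing them forward along the flow of a local section $W$ of $\SW$: since $[\SW,\SE]\subset\SE$, these extensions automatically lie in $\SE$, and they are tangent to the slices of the flow, so both conditions are met at once. Either way, the identification $\omega_\xi \leftrightarrow \omega_\SE|_{\xi\otimes\xi}$ holds, and the non--degeneracy follows because the projection $\xi \to \SE/\SW$ is an isomorphism (its kernel is $\xi\cap\SW=0$).
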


Equation (\ref{eq:C1}) shows that orientability of $TM$ is equivalent to orientability of $\SW$. If $\SE$ arises from some Engel structure $\SD$, $\SE$ is canonically oriented by Equation (\ref{eq:C2}). Hence, choosing orientations for $\SD$ and $M$ yields a parallelisation of $M$ up to homotopy. We are interested precisely in this case.

\subsection{Formal Engel structures}

Proposition \ref{prop:str} motivates the following definition:
\begin{definition}
Let $M$ be a $4$--manifold. A complete flag $\SW \subset \SD \subset \SE \subset TM$ endowed with bundle isomorphisms as in Equations (\ref{eq:C1}) and (\ref{eq:C2}) is said to be a \emph{formal Engel structure}.
\end{definition}

The following was the main result in \cite{CPPP}:
\begin{proposition}
Let $M$ be a smooth $4$--manifold. The inclusion
\[ i: \Engel(M)) \to \FEngel(M) \]
of the space of Engel structures into the space of formal Engel structures is surjective in all homotopy groups.
\end{proposition}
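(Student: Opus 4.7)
The plan is to proceed in two stages: first, reduce to the case where the middle plane field $\SE$ is a genuine even--contact structure, carrying $\SW$ and $\SD$ along as formal data; then, with $\SE$ fixed, deform $\SD$ inside $\SE$ so that its Lie square fills $\SE$. Both stages must be carried out parametrically, since the statement is about surjectivity on all homotopy groups.

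\textbf{Stage one.} Starting from a formal Engel structure $(\SW, \SD, \SE)$, I would first homotope $\SE$ to a genuine even--contact structure, carrying the rest of the flag along. The condition $[\SE,\SE] = TM$ is an open, ample, Diff--invariant first--order relation on rank--$3$ distributions, so convex integration (Gromov--McDuff style) yields an $h$--principle for even--contact structures on a closed $4$--manifold; the formal data needed as input is exactly a rank--$3$ distribution together with the isomorphism in Equation (\ref{eq:C1}), which is already part of the formal Engel flag. After this deformation the canonical kernel of $\SE$ can be isotoped to agree with $\SW$, and $\SD$ is retained as a rank--$2$ sub--bundle with $\SW \subset \SD \subset \SE$.

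\textbf{Stage two.} Fix $\SE$ and its characteristic foliation by $\SW$. By Proposition \ref{prop:str}, on any $3$--dimensional transversal $N$ to $\SW$, the trace $\xi = TN \cap \SE$ is a contact structure and $X = TN \cap \SD$ is a Legendrian line field. A direct bracket computation shows that the Engel condition $[\SD,\SD] = \SE$ is equivalent to the condition that the holonomy of $\SD$ along the flow of $\SW$ rotates the Legendrian line $X$ inside $\xi$ with everywhere non--zero angular velocity. Thus the problem becomes: given a formal path of Legendrian lines along each $\SW$--orbit, deform it to a genuinely rotating one, parametrically and relative to the complement of a flow box.

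\textbf{Main obstacle.} The core of the argument, and the step I expect to be the hardest, is the parametric construction of local models --- Engel ``shells'' supported in a thickened $\SW$--flow box --- that realise any prescribed amount of Legendrian turning without altering the ambient even--contact structure or the formal homotopy class of $\SD$. Once such shells are built, one covers $M$ by finitely many $\SW$--flow boxes and inserts shells iteratively to upgrade $\SD$ to an honest Engel structure while tracking a family of formal Engel structures parametrised by $S^k$. The genuine difficulty lies in the fact that $\SW$--trajectories can be topologically complicated and need not close up: the insertions must therefore be done in a way that is insensitive to the global dynamics of $\SW$ and compatible with the parameter, which forces the shell models to be sufficiently flexible to absorb arbitrary formal corrections while respecting both isomorphisms in Proposition \ref{prop:str}.
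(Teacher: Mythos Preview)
The paper does not prove this proposition: it is stated without proof and attributed to \cite{CPPP} (``The following was the main result in \cite{CPPP}''). There is therefore no proof in the present paper to compare your proposal against.

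That said, your two--stage outline is broadly faithful to the strategy of \cite{CPPP}. Stage one is correct as stated: even--contact structures satisfy a full $h$--principle by convex integration (this is the result of McDuff \cite{McD}), and this reduces the problem to deforming $\SD$ inside a fixed even--contact $\SE$. In Stage two, however, you lean on the \emph{second} condition of Proposition~\ref{prop:key} (rotation of a Legendrian line in a contact slice), whereas the argument in \cite{CPPP} is organised around the \emph{first} condition: one interprets $\SD$ fibrewise along $\SW$ as a family of immersed curves in $\NS^2$ and deforms them to be convex. The ``shells'' you anticipate are, in that language, local models in which one inserts many small convex loops into these curves (compare Lemma~\ref{lem:Little} and Subsection~\ref{ssec:curves} of the present paper, which recycles exactly this mechanism). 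The contact--rotation picture you describe is equivalent only where a contact transversal exists, and the global construction does not rely on producing one; the convex--curve viewpoint is what makes the argument insensitive to the dynamics of $\SW$, which is precisely the obstacle you flag.
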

The proposition completely solves the existence problem for Engel structures. This note intends to shed some light on their classification.

\subsection{Curves in the $2$--sphere and the Engel local model} \label{ssec:key}

Consider a (possibly open) $3$--manifold $N$. We focus our attention now on Engel structures on the product manifold $N \times [0,1]$ with coordinates $(p,t)$. We are interested in those $2$--distributions $\SD$ of the form $\langle \partial_t \rangle \oplus X$, with $X$ a vector field tangent to the slices $N \times \{t\}$. We write $X' = [\partial_t,X]$ and $X'' = [\partial,X']$.

Given some point $p \in N$, one can use the flow of $\partial_t$ to define a trivialisation that identifies $\NS(T_{(p,t)}N \times \{t\})$ with $\NS^2$ in a $t$--independent fashion. Along each curve $\{p\} \times [0,1]$, the vector fields $X,X',X''$ can be regarded as curves $X_p,X_p',X_p'': [0,1] \to \NS^2$. Given a curve in $\NS^2$, we say that one of its points is an \textsl{inflection point} if the normal curvature of the curve is vanishing at the point. If the curvature is everywhere positive, the curve is said to be \emph{convex}; if it is everywhere negative, it is said to be \emph{concave}.

\begin{proposition}[\cite{CPPP}] \label{prop:key}
The $2$--distribution $\SD = \langle \partial_t \rangle \oplus X$ in $N \times [0,1]$ is not integrable at a point $(p,t)$ if and only if $X_p$ is immersed at time $t$.

$\SD$ is Engel at $(p,t)$ if and only if, additionally, at least one of the following two conditions holds:
\begin{enumerate}
  \item the curve $X_p$ has no inflection point at time $t$,
  \item $\langle X_q(t),X_q'(t) \rangle = [\SD, \SD] \cap T(N \times \{t\})$ is a contact structure in $\Op(p)\times\{t\}$.
\end{enumerate}
\end{proposition}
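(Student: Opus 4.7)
\emph{Sketch of proof.} The plan is to compute the Lie flag of $\SD$ pointwise and match each algebraic condition with the corresponding geometric one on the curve $X_p$. Since $X$ and $X'=[\partial_t,X]$ are both slice-tangent, $\SE=[\SD,\SD]$ at $(p,t)$ is spanned by $\partial_t$, $X$ and $X'$, and it has rank $3$ (equivalently, $\SD$ is non-integrable at $(p,t)$) exactly when $X$ and $X'$ are linearly independent there. Using the flow of $\partial_t$ to trivialise the slice tangent bundle along $\{p\}\times[0,1]$, the field $X$ becomes an unnormalised curve $\tilde X_p$ in $T_pN$ with $\tilde X_p'(t)=X'_{(p,t)}$ and $\tilde X_p''(t)=X''_{(p,t)}$. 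Its radial projection to $\NS^2$ is $X_p$, which is immersed at $t$ iff $\tilde X_p$ and $\tilde X_p'$ are linearly independent, proving the first assertion.

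For the second, assume $\SD$ is non-integrable near $(p,t)$, so $\SE$ has constant rank $3$ in a neighbourhood and $\SD$ is Engel there iff $\SE$ is even-contact, i.e. $[\SE,\SE]=TM$ at $(p,t)$. Since $\partial_t\in\SE$, one identifies $TM/\SE \cong T_pN/\langle X,X'\rangle$ at $(p,t)$, which reduces the question to computing brackets of the generators of $\SE$ modulo $\SE$. Now $[\partial_t,X]=X'\in\SE$, so only $[\partial_t,X']=X''$ and $[X,X']$ can contribute; hence $\SE$ is even-contact at $(p,t)$ iff the class of $X''$ or of $[X,X']$ in $T_pN/\langle X,X'\rangle$ is nonzero.

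These two alternatives are precisely conditions (1) and (2). A standard computation shows that a curve $\gamma$ in $\NS^2$ has an inflection point at $t$ iff $\gamma,\gamma',\gamma''$ span only a $2$-plane in $\R^3$ there (the geodesic curvature is the component of $\gamma''$ normal to that plane); via the identifications above, this is exactly $X''\in\langle X,X'\rangle$ at $(p,t)$. For (2), since $X$ and $X'$ are slice-tangent, their bracket in $TM$ agrees with the intrinsic bracket on $N\times\{t\}$, so by Frobenius the plane field $\langle X,X'\rangle$ is a contact structure on $\Op(p)\times\{t\}$ iff $[X,X']\notin\langle X,X'\rangle$ there. Combining the two possible ways of exiting $\SE$ via a single bracket yields the disjunction in the statement. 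No deeper obstacle is anticipated; the only non-formal input is the characterisation of sphere inflection points just used, which is standard.
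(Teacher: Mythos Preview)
The paper does not supply its own proof of this proposition; it is quoted from \cite{CPPP} without argument. Your sketch is correct and is the natural direct computation: the Lie flag of $\SD$ is generated by $\partial_t, X, X', X''$ and $[X,X']$, so non-integrability of $\SD$ amounts to $X,X'$ being independent (immersion of $X_p$), and even-contactness of $\SE$ amounts to at least one of $X''$ or $[X,X']$ escaping $\langle X,X'\rangle$, which you correctly match with conditions (1) and (2). The only spot worth one extra line is the passage from the unnormalised $\tilde X_p$ to the spherical curve $X_p$: since $(X_p, X_p', X_p'')$ is related to $(\tilde X_p, \tilde X_p', \tilde X_p'')$ by a lower-triangular transformation with nonvanishing diagonal entries $1/|\tilde X_p|$, the determinant condition $\det(X_p,X_p',X_p'')=0$ characterising an inflection point transfers to $\det(\tilde X_p,\tilde X_p',\tilde X_p'')=0$ unchanged. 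You flag this as standard, and it is.
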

The first condition does not depend on the framing chosen for $TN$, since convexity is preserved by transformations in $\PGL(2)$. We will normally choose framings that make the second condition easy to check.

\begin{remark}
We can also understand formal Engel structures in this setting. Indeed, suppose we have a oriented flag $\SW \subset \SD \subset \SE$ with $\SW = \langle \partial_t \rangle$ in $\D^3 \times [0,1]$. Then, it can be regarded as a $\D^3$--family of \emph{formal immersions} of $[0,1]$ into $\NS^2$. Proposition \ref{prop:key} shows that they are immersions if and only if $\SE = [\SD,\SD]$. If they are additionally convex or concave, the plane field $\SD$ is Engel.
\end{remark}

\subsection{Prolongations} \label{ssec:prolongations}

The canonical examples of Engel structures can be understood within the framework of Proposition \ref{prop:key}:

\begin{example} \label{ex:Cartan}
Let $(N, \xi)$ be a contact $3$--manifold. The total space of the $\mathbb{S}^1$--bundle $\pi: \mathbb{S}(\xi) \to N$ carries an Engel structure given by the universal family construction, called the (oriented) \textsl{Cartan prolongation}. Recall that points in $\mathbb{S}(\xi)$ are pairs $(p,L)$ with $p \in N$ and $L$ an oriented line in $\xi_p$. The Engel structure is simply $\SD_{(p,L)} = d_{(p,L)}\pi^{-1}(L)$. 

In particular, given a disc $\D^3 \subset N$, one can select a framing $\{Y,Z\}$ for $\xi$. Then, in $\D^3 \times \mathbb{S}^1$, with coordinates $(p,L = [\cos(t)Y + \sin(t)Z])$, $t \in [0,2\pi)$, the definition above yields the following structure:
\[ \SD_{(p,L)} = \langle \partial_t, \cos(t)Y + \sin(t)Z \rangle \]
which satisfies the second condition from Proposition \ref{prop:key}. Note that $\SW = \langle \partial_t \rangle$.
\end{example}

\begin{example}
Let $(N, g)$ be a lorentzian manifold of signature $(2,1)$. We denote by $C \subset TN$ the subset given at each point $p \in N$ by the light--like cone $C_p$. To $C$ one can associate the $\NS^1$--bundle $\pi: \NS(C) \to N$ given by quotienting using the $(\R \setminus \{0\})$--action of rescaling. It can be endowed with a canonical Engel structure $\SD(p,L) = d_{(p,L)}\pi^{-1}(L)$, where $L$ is a line in $C_p$. $\SD$ is called the \textsl{Lorentz prolongation}.

Find a disc $\D^3 \subset N$ and choose a orthonormal framing $\{V,Y,Z\}$ with $Y$ and $Z$ space--like, and $V$ time--like. Then, the construction we just described can be written down as:
\[ \SD_{(p,L)} = \langle \partial_t, V + \cos(t)Y + \sin(t)Z \rangle, \quad t \in [0,2\pi). \]
It satisfies the first condition from Proposition \ref{prop:key}. Unlike the previous example, $\SW$ is always transverse to $\langle \partial_t \rangle$.
\end{example}

Up to homotopy, there is a well defined plane associated to each lorentzian metric: any plane that is space--like for the metric. We will be interested in considering lorentzian metrics whose planes are in the same homotopy class as some given contact structure.

\section{The space of Cartan prolongations} \label{sec:prolongations}

For the rest of the article fix a closed, orientable $3$--manifold $N$. Denote by $\Cont$ the space of oriented contact structures on it. It naturally decomposes into several components $\Cont(c)$ corresponding to contact structures having a particular Euler class $c \in H^2(N, \mathbb{Z})$. We can further denote $\Cont(\xi)$ for the connected component containing the contact structure $\xi \in \Cont$. 

Each oriented $\mathbb{S}^1$--bundle over $N$ is given by its Euler class $c$; denote its total space by $N(c)$. Suggestively, denote $\Cartan(c)$ for the space of all oriented Engel structures on $N(c)$ having the fibre direction as their kernel. Write $\pi: N(c) \to N$ for the projection. Any Engel structure $\SD \in \Cartan(c)$ defines a contact structure $\xi = d\pi(\SE)$ on $N$, since $\SW = \ker(d\pi)$. Orient the line field $\SW$ using the orientation of the fibre. Then, $\xi$ inherits an orientation from $\SE/\SW$. Hence, there is a projection:
\[ \Cartan(c) \to \Cont. \] 

It is immediate that the Euler class of $\xi$ must be of the form $kc$, with $k>0$. This integer is called the \textsl{turning number} and is computed as follows. Take any $\NS^1$--fibre of $N(c)$. Find some $\NS^1$--invariant, positively oriented framing of $\SE/\SW$. Compute the degree of $\SD/\SW$ with respect to this framing. The resulting number $k$ does not depend on the choices involved and is necessarily positive.

Denote by $\Cartan(c,k) \subset \Cartan(c)$ the space of Cartan prolongations having turning number $k$. Write $\Cartan(c,k,\xi) \subset \Cartan(c,k)$ for the subspace of those that additionally project down to $\xi \in \Cont$. Observe that a path of Cartan prolongations projects down to a path of contact structures; write $\Cartan(c,k,[\xi])$ for the subspace of those prolongations that lift contact structures homotopic to $\xi$.

Denote by $\Cover(c,k)$ the space of $k$--fold covers from $N(c)$ to $N(kc)$; i.e. positively oriented fibrewise submersions with $k$ sheets lifting the identity on $N$. Once we fix a bundle isomorphism between the sphere bundle of $\xi$ and $N(kc)$, we can construct the following homeomorphism:
\[ f: \Cartan(c,k,\xi) \to \Cover(c,k) \]
\[ f(\SD)(p,L) = (p,[d\pi_p(\SD(p,L))]), \]
where $[d\pi_p(\SD(p,L))]$ denotes the oriented line in $\xi_p$ determined by projecting down $\SD(p,L)$. Note that $f(\SD)$ pulls back the canonical Cartan prolongation in $N(kc) \cong \NS(\xi)$ to $\SD$.

All the contact structures in a neighbourhood of $\xi$ can be identified with $\xi$ itself using a projection along a complementary line field. This implies that the corresponding sphere bundles can consistently be identified with $N(kc)$. This readily implies that 
\[ \Cartan(c,k) \to \Cont(kc) \quad \text{ and } \quad  \Cartan(c,k,[\xi]) \to \Cont(\xi) \]
are locally trivial fibrations with fibre $\Cover(c,k)$. 

Our aim in this section is to understand the homotopy type of the spaces $\Cartan(c,k,[\xi])$ using the fibration structure we have just presented. Before we provide a precise statement, we need some additional setup.

\subsection{Prolongations over a fixed contact structure} \label{ssec:Cartan}

First we will describe the homotopy groups of the space $\Cartan(c,k,\xi) \cong \Cover(c,k)$, the fibre. The $\pi_0$ case was already described in \cite{KS}.

Observe that, by fixing some element $\tau \in \Cover(c,k)$, one readily obtains an inclusion $\Cover(kc,1) \subset \Cover(c,k)$ by making $\tau$ act by pullback. Since $\Cover(kc,1)$ is a group that contains the gauge transformations $\SG(kc)$ of $N(kc)$ as an abelian subgroup, we can regard $\SG(kc)$ as a subspace of $\Cover(c,k)$ as well. 

\begin{lemma} \label{lem:retractions}
For any $\tau \in \Cover(c,k)$, the inclusions
\[ \SG(kc) \to \Cover(kc,1) \to \Cover(c,k) \]
are weak homotopy equivalences.
\end{lemma}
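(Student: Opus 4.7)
The plan is to realize each of the three spaces as the space of smooth sections of a locally trivial fibre bundle over $N$, and then to reduce the lemma to showing that the two inclusions come from bundle maps which are fibrewise weak homotopy equivalences; since a fibrewise weak equivalence of Hurewicz fibrations over $N$ induces a weak equivalence of section spaces, this suffices.

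First I will set up the bundles. Let $\Phi \to N$ be the locally trivial smooth fibre bundle whose fibre at $p$ is $\Diff_+(N(kc)_p)$, and let $\Psi \to N$ be the one whose fibre at $p$ is the space of $k$-fold covers $N(c)_p \to N(kc)_p$; both are Hurewicz fibrations. The canonical rotation action on the oriented $\NS^1$-principal bundle $N(kc)$ cuts out a trivial sub-bundle $N \times \NS^1 \subset \Phi$ of fibrewise rotations, whose inclusion is respected by the transition functions of $\Phi$ (they act by conjugation by rotations and fix $\NS^1$ because it is abelian). One then checks directly that $\SG(kc) = \Gamma(N \times \NS^1)$, $\Cover(kc,1) = \Gamma(\Phi)$, and $\Cover(c,k) = \Gamma(\Psi)$. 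The first inclusion in the lemma is the sub-bundle inclusion, and the second is the smooth bundle map $\mu_\tau: \Phi \to \Psi$ defined fibrewise by $\phi \mapsto \phi \circ \tau_p$, which is well-defined because $\tau$ is a smooth section of $\Psi$.

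The hard part will be verifying that each fibre map is a weak equivalence. All three fibres turn out to be homotopy equivalent to $\NS^1$. For $\NS^1 \hookrightarrow \Diff_+(\NS^1)$ this is classical, with retract given by the rotation number $\rho_1(\phi) = \int_0^1 (\tilde\phi(x) - x)\, dx \in \R/\Z$ (for $\tilde\phi : \R \to \R$ any lift). For the space of $k$-fold covers $\NS^1 \to \NS^1$, one observes that its universal cover---the convex set of strictly increasing smooth $\tilde\sigma : \R \to \R$ with $\tilde\sigma(x+1) = \tilde\sigma(x) + k$---is contractible, and that the deck group $\Z$ acts by integer translations; hence the space is homotopy equivalent to $\NS^1$. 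The map $\phi \mapsto \phi \circ \tau_p$ then induces an isomorphism on $\pi_1$: the generating loop $t \mapsto T_t$, $t \in [0,1]$, of $\Diff_+(\NS^1)$ is sent to the loop $t \mapsto T_t \circ \tau_p$, which lifts in the universal cover of the target to the path $\tilde\tau_p + t$ and wraps exactly once in the $\Z$-quotient.

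To conclude, Dold's theorem says that fibrewise weak equivalences between Hurewicz fibrations with CW-type fibres (here all $\simeq \NS^1$) are fibrewise homotopy equivalences, and post-composition with fibrewise homotopy inverses produces homotopy inverses on the section spaces. Applied to the sub-bundle inclusion $N \times \NS^1 \subset \Phi$ and to $\mu_\tau: \Phi \to \Psi$, this yields the two weak homotopy equivalences claimed in the lemma.
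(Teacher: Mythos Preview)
Your argument is correct and takes a genuinely different route from the paper. The paper proceeds by direct obstruction theory: it fixes a relative class $\phi: (\D^j,\partial\D^j) \to (\Cover(c,k),\SG(kc))$, triangulates $N$, and over each simplex lifts the family of $k$--fold self--submersions of $\NS^1$ to a family in $\Diff^+(\NS^1)$, then uses the classical retraction $\Diff^+(\NS^1)\simeq\NS^1$ to push the family into $\SG(kc)$ cell by cell. Your approach instead packages all three spaces as section spaces of locally trivial bundles over $N$, checks that the two inclusions are induced by fibrewise weak equivalences (all fibres being $\simeq\NS^1$), and invokes Dold's theorem to upgrade these to fibre homotopy equivalences, which then pass to section spaces. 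The paper's method is more elementary---it avoids citing Dold and essentially reproves the relevant special case by hand---while yours is cleaner and makes transparent \emph{why} the result holds: the three bundles have fibrewise--equivalent fibres. Both arguments rest on the same core fact, that $\NS^1\hookrightarrow\Diff^+(\NS^1)$ is a deformation retract, and your explicit $\pi_1$ check for $\mu_\tau$ is exactly what the paper's lifting step accomplishes implicitly.
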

\begin{proof}
Let $\phi: (\D^j,\partial \D^j, 1) \to (\Cover(c,k),\SG(kc), \tau)$ be a continuous function representing an element in $\pi_j(\Cover(c,k),\SG(kc),\tau)$. It is sufficient to show that it retracts to $\SG(kc)$.

Restricted to an $i$--simplex $\Delta_i$ of $N$, the bundles $N(c)$ and $N(kc)$ are trivial. There, $\phi$ can be thought as a $(\Delta_i \times \D^j)$ --family of positively oriented submersions of $\NS^1$ onto itself with $k$--sheets. The $\SO(2)$--bundle structure on $N(c)$ can be taken to be the one induced from $N(kc)$ by using $\tau$, and hence $\tau$ can be assumed to be the map $z^k$ on each fibre; the elements of $\SG(kc) \subset \Cover(c,k)$ are those of the form $\phi \circ z^k$ with $\phi$ a $(\Delta_i \times \D^j)$--family of rotations.

Assume that a suitable homotopy has already been found in the $(i-1)$th skeleton of $N$. The $(\Delta_i \times \D^j)$--family of submersions of $\NS^1$ onto itself can be lifted to define a family in $\Diff^+(\NS^1)$ such that its boundary lies in $\NS^1$, the rotations. Recalling that $\NS^1 \to \Diff^+(\NS^1)$ is a weak homotopy equivalence concludes the inductive step.
\end{proof}

\begin{lemma} \label{lem:homotopy}
The homotopy groups of $\SG(kc)$, and hence of $\Cover(c,k)$, are given by:
\[ \pi_0 = H^1(N, \mathbb{Z}), \qquad \pi_1 = \mathbb{Z}, \qquad \pi_j = 0, \text{ for } j>1. \]
\end{lemma}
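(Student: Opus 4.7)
The plan is to reduce the computation to a standard one by identifying the gauge group $\SG(kc)$ with a mapping space. Since the structure group $\NS^1$ is abelian, its conjugation action on itself is trivial, so the adjoint bundle $N(kc) \times_{\NS^1} \NS^1$ is canonically trivialised as the product $N \times \NS^1$. A gauge transformation being a section of this adjoint bundle, one obtains a canonical isomorphism of topological groups $\SG(kc) \cong C^\infty(N, \NS^1)$ with pointwise multiplication as group law. In particular the homotopy type is independent of the specific Euler class $kc$ or total space $N(kc)$.

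Next, I would fix a basepoint $p_0 \in N$ and consider the evaluation map $\operatorname{ev}_{p_0} \colon C^\infty(N, \NS^1) \to \NS^1$. This is a fibration whose fibre is the based mapping space $C^\infty_*(N, \NS^1)$, and it admits the continuous section $s \mapsto \operatorname{const}_s$ (constant maps). Hence the fibration splits and one has a weak equivalence $C^\infty(N, \NS^1) \simeq \NS^1 \times C^\infty_*(N, \NS^1)$, which reduces the problem to computing the homotopy groups of the based mapping space.

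Finally, using that $\NS^1$ is a $K(\Z, 1)$, the standard identification $[N, K(\Z, 1)]_* = H^1(N; \Z)$ yields $\pi_0 C^\infty_*(N, \NS^1) = H^1(N; \Z)$. For $j \geq 1$, adjunction gives $\pi_j C^\infty_*(N, \NS^1) = \pi_0 C^\infty_*(N, \Omega^j \NS^1)$; since $\Omega \NS^1$ is weakly equivalent to the discrete space $\Z$, the iterate $\Omega^j \NS^1$ is contractible for $j \geq 2$, while for $j = 1$ the set $\pi_0 C^\infty_*(N, \Z)$ is trivial because based maps from the connected manifold $N$ to a discrete target are constant (and therefore equal to the basepoint). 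Combined with the product decomposition above, this produces $\pi_0 \SG(kc) = H^1(N; \Z)$, $\pi_1 \SG(kc) = \pi_1 \NS^1 = \Z$, and $\pi_j \SG(kc) = 0$ for $j \geq 2$, as claimed. The only step that requires genuine care is the first one, namely verifying that abelianness of the structure group really makes the gauge group a plain mapping space independent of the bundle; the remainder is a routine exercise in the homotopy theory of mapping spaces into Eilenberg--MacLane spaces.
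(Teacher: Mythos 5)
Your proof is correct and rests on exactly the same central idea as the paper's: identify the gauge group with a mapping space $\Maps(N, \NS^1)$ and exploit the fact that $\NS^1$ is a $K(\Z,1)$. The only difference is in what you take as known: the paper delegates the homotopy--group computation to Thom's general formula $\pi_j(\Maps(N, K(G,n))) \cong H^{n-j}(N;G)$, whereas you reprove the $n=1$ case by hand via the splitting of the evaluation fibration and loop--space adjunction, and you also make explicit the (elementary but unstated in the paper) observation that abelianness of $\NS^1$ is what trivialises the adjoint bundle and makes $\SG(kc)$ a plain mapping space independent of $kc$. That extra care is sound, and the argument goes through; it is a fleshed-out version of the paper's proof rather than a different route.
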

\begin{proof}
Recall that $\NS^1$ is the classifying space for the discrete group $\Z$. Then:
\[ \pi_0(\SG(N)) = \pi_0(\Maps(N, \NS^1)) = H^1(N, \mathbb{Z}). \]
In general, it is a result of Thom \cite{Th} that $\pi_j(\Maps(N, K(G,n))) = H^{n-j}(N, G)$. 
\end{proof}

\begin{remark} \label{rem:horizontalDistance}
Lemma \ref{lem:homotopy} can be proved using obstruction theory as in Lemma \ref{lem:retractions}. This is useful to provide a geometrical interpretation of the result. Let us outline the argument, which is similar to the one presented in \cite{KS}. We need to fix a basepoint $\tau \in \Cover(c,k)$.

An explicit identification between $\pi_0(\Cover(c,k))$ and $H^1(N, \mathbb{Z})$ can be given as follows. Take an element $\nu \in \Cover(c,k)$. Over each loop $\gamma \subset N$, the bundles $N(c)$ and $N(kc)$ trivialise. Given any section $s \in \Gamma(N(c)|_\gamma)$, one can compute the degree of $\nu(s)$ with respect to $\tau(s)$. This gives a homomorphism $H_1(N,\Z) \to \Z$ and thus an element in $H^1(N, \mathbb{Z})$. This element only depends on the connected component of $\nu$; we call it the \textsl{horizontal distance} between $\tau$ and $\nu$.

Similarly, let $\nu_t \in \Cover(c,k)$, $t \in \NS^1$, be a loop with $\nu_1 = \tau$. Take a point $p \in N$ and lift it to a point $P \in N(c)$. We say that the degree of $t \to \nu_t(P)$, $t \in \NS^1$, as a loop in the fibre of $N(kc)$ over $p$, is the \emph{looping number}. This identifies $\pi_1(\Cover(c,k))$ with $\Z$.
\end{remark}

Let $\xi$ be a contact structure with Euler class $kc$. Recall that our objective is to understand the homotopy type of $\Cartan(c,k,[\xi])$. From these lemmas and the homotopy long exact sequence for the fibration, it easily follows that:
\[ \pi_j(\Cartan(c,k,[\xi])) = \pi_j(\Cont(\xi)), \quad \text{ for } j>2. \]
However, the cases of $\pi_0$, $\pi_1$, and $\pi_2$ are more subtle. The key is understanding the connecting morphism
\[ \pi_j(\Cont(\xi)) \to \pi_{j-1}(\Cover(c,k)) \qquad j=1,2, \]
which is not zero in general.

\subsection{Formal Cartan prolongations} \label{ssec:FCartan}

We will now introduce two spaces of geometrical structures whose homotopy groups are easy to compute. We will be able to regard $\Cartan(c,k,[\xi])$ as a subspace within them. This will allow us to state and prove our main theorem about the spaces $\Cartan(c,k,[\xi])$.

\subsubsection{Prolongations of plane fields}

Denote by $\Planes$ the space of oriented plane fields in $N$. Write $\Planes(c)$ for those of Euler class $c \in H^2(N, \Z)$ and $\Planes(\xi)$ for the connected component containing the plane field $\xi \in \Planes$. By fixing a parallelisation of $N$, $\Planes$ can be readily identified with $\Maps(N, \mathbb{S}^2)$.

We write $\FCartan(c)$ for the space of oriented $2$--distributions in $N(c)$ that contain the fibre direction, are everywhere non--integrable (but not necessarily maximally), and whose induced $3$--distribution obtained by Lie bracket is preserved by flows along the fibre. The elements in $\FCartan(c)$ are called \textsl{formal Cartan prolongations}.

Let $\SD \in \FCartan(c)$. Then, by definition, $\xi = d\pi(\SE = [\SD,\SD])$ is a plane field in $N$; $\xi$ being contact amounts to $\SD$ being an element in $\Cartan(c)$. The orientation of $\SE/\SW$ orients $\xi$, just like in the case of Cartan prolongations. The turning number $k$ can also be defined; we write $\FCartan(c,k) \subset \FCartan(c)$ for the subspace of those formal Cartan prolongations with turning number $k$: they necessarily project down to plane fields of Euler class $kc$. Similarly, write $\FCartan(c,k,[\xi])$ for those lifting plane fields homotopic to $\xi$. 

There are locally trivial fibrations:
\[ \Cover(c,k) \longrightarrow \FCartan(c,k) \longrightarrow \Planes(kc), \]
\[ \Cover(c,k) \longrightarrow \FCartan(c,k,[\xi]) \longrightarrow \Planes(\xi), \]
where $\Cover(c,k)$ is defined as before.

\subsubsection{Prolongations of rank $2$ bundles}

There is a natural inclusion of the space of oriented plane fields into the space of oriented rank $2$ bundles:
\[ \Maps(N,\NS^2) \cong \Planes \to \Bundles \cong \Maps(N,\Gr(2,\infty)), \]
where $\Gr(2,\infty)$ is the infinite Grassmanian of oriented $2$--planes, which is the Eilenberg--Maclane space $K(2,\mathbb{Z})$. We write $\Bundles(c)$ for the subspace of bundles having Euler class $c \in H^2(N,\Z)$. Let $\V(2,\infty)$ be the Stiefel manifold of ordered pairs of orthonormal vectors in $\R^\infty$; recall that there is a tautological fibration 
\[ \NS^1 \to \V(2,\infty) \to \Gr(2,\infty). \]

We will now explain what a prolongation is in this setting. We define $\FCartan^\infty(c)$ to be the space of maps of $N(c)$ into $\V(2,\infty)$ which are lifts of maps $N \to \Gr(2,\infty)$ and are fibrewise submersions respecting the orientation. This space has several components $\FCartan^\infty(c,k)$ distinguished by the Euler class $kc$ of the underlying $2$--plane bundle, $k>0$. 

Let $\xi$ be a oriented plane field of Euler class $kc$. The following diagram commutes:
\vspace{-0.2cm}
\begin{figure}[H] \label{diag:fibrations}
\centering
\begin{tikzpicture}
  \matrix (m) [matrix of math nodes,row sep=1.5em,column sep=4em,minimum width=2em] {
     \Cover(c,k) & \Cartan(c,k,[\xi]) & \Cont(\xi) \\
     \Cover(c,k) & \FCartan(c,k,[\xi]) & \Planes(\xi) \\
     \Cover(c,k) & \FCartan^\infty(c,k) & \Bundles(kc) \\};
  \path[-stealth]
    (m-1-1) edge node [left] {$\cong$} (m-2-1)
            edge node [above] {$$} (m-1-2)
		(m-1-2) edge node [left] {} (m-2-2)
            edge node [above] {$$} (m-1-3)
		(m-1-3) edge node [left] {$$} (m-2-3)
		
		(m-2-1) edge node [left] {$\cong$} (m-3-1)
            edge node [above] {$$} (m-2-2)
		(m-2-2) edge node [left] {} (m-3-2)
            edge node [above] {$$} (m-2-3)
		(m-2-3) edge node [left] {$$} (m-3-3)
		
		(m-3-1) edge node [below] {$$} (m-3-2)
		(m-3-2) edge node [below] {$$} (m-3-3);
\end{tikzpicture}
\end{figure}
\vspace{-0.6cm}
where each row is a fibration.

\subsection{Statement of the theorem}

Let $\xi$ be a contact structure of Euler class $kc$. According to the commutative diagram above, the connecting morphism
\[ \pi_j(\Cont(\xi)) \to \pi_{j-1}(\Cover(c,k)) \]
factors through $\pi_j(\Bundles(kc))$. This motivates us to consider the subgroup:
\[ \pi_j^\trivial(\Cont(\xi)) = \ker(\pi_j(\Cont(\xi)) \to \pi_j(\Bundles(kc))). \]

We can now state the main result of the section.
\begin{theorem} \label{thm:ot}
Let $\xi$ be an overtwisted contact structure of Euler class $kc$. Then:
\begin{align*}
\pi_0(\Cartan(c,k,[\xi])) &= \pi_0(\Cont(\xi)) \times H^1(N,\Z_2) & \\
\pi_1(\Cartan(c,k,[\xi])) &= \pi_1^\trivial(\Cont(\xi)) \times \Z_2 & \\
\pi_2(\Cartan(c,k,[\xi])) &= \pi_2^\trivial(\Cont(\xi)) & \\
\pi_j(\Cartan(c,k,[\xi])) &= \pi_j(\Cont(\xi)) & \text{ if } j>2.
\end{align*}
The term $H^1(N,\Z_2)$ is the mod $2$ reduction of the horizontal distance. Similarly, the term $\Z_2$ is the parity of the looping number.
\end{theorem}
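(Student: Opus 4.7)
The plan is to combine Eliashberg's parametric $h$--principle for overtwisted contact structures with an analysis of the long exact sequence of the middle fibration of Diagram \ref{diag:fibrations}, using the bottom fibration to identify the connecting maps.

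\emph{Step 1: reduction to a formal problem.} Since $\xi$ is overtwisted, Eliashberg's classification theorem says that $\Cont(\xi) \hookrightarrow \Planes(\xi)$ is a weak homotopy equivalence. Applied to the upper morphism of fibrations in Diagram \ref{diag:fibrations}, the five--lemma yields a weak equivalence $\Cartan(c,k,[\xi]) \simeq \FCartan(c,k,[\xi])$. By Lemma \ref{lem:homotopy}, the fibre $\Cover(c,k)$ has homotopy concentrated in degrees $0$ and $1$, with $\pi_0 = H^1(N,\Z)$ and $\pi_1 = \Z$. The long exact sequence of the middle fibration thus gives $\pi_j(\Cartan(c,k,[\xi])) = \pi_j(\Cont(\xi))$ for $j > 2$, and reduces the remaining cases to the connecting homomorphisms
\[ \partial_j:\ \pi_j(\Planes(\xi)) \longrightarrow \pi_{j-1}(\Cover(c,k)), \qquad j = 1, 2. \]

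\emph{Step 2: factoring and computing the boundary.} Naturality with respect to the morphism between the middle and bottom rows of Diagram \ref{diag:fibrations} gives a factorisation
\[ \partial_j:\ \pi_j(\Planes(\xi)) \longrightarrow \pi_j(\Bundles(kc)) \xrightarrow{\partial^\infty_j} \pi_{j-1}(\Cover(c,k)), \]
so by definition $\ker(\partial_j) \supseteq \pi_j^{\trivial}(\Cont(\xi))$. A direct analysis of the bottom fibration, combined with the Postnikov tower of $\NS^2$ over $K(\Z,2) = \Gr(2,\infty)$ (whose homotopy fibre is the $2$--connected cover of $\NS^2$, with $\pi_3 = \Z$ generated by the Hopf map) and with obstruction theory on the closed $3$--manifold $N$, shows that the composite $\partial_j$ has image exactly $2\, \pi_{j-1}(\Cover(c,k))$: namely $2H^1(N,\Z)$ for $j=1$ and $2\Z$ for $j=2$. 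The factor of $2$ records the nontriviality of the Hopf map, reflecting the discrepancy between $\pi_3(\NS^2) = \Z$ and $\pi_3(K(\Z,2)) = 0$. Consequently $\ker(\partial_j) = \pi_j^{\trivial}(\Cont(\xi))$ and the cokernels are $H^1(N,\Z_2)$ and $\Z_2$, respectively.

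\emph{Step 3: assembling.} Substituting into the long exact sequence produces $\pi_2(\FCartan(c,k,[\xi])) = \pi_2^{\trivial}(\Cont(\xi))$ together with the extensions
\[ 0 \to \Z_2 \to \pi_1(\FCartan(c,k,[\xi])) \to \pi_1^{\trivial}(\Cont(\xi)) \to 0, \]
\[ 0 \to H^1(N,\Z_2) \to \pi_0(\FCartan(c,k,[\xi])) \to \pi_0(\Cont(\xi)) \to 0. \]
Both extensions split because the canonical prolongation construction of Example \ref{ex:Cartan} defines a continuous section $\Cont \to \Cartan$ of the underlying projection. The geometric identifications of the new summands with the horizontal distance and the looping number modulo $2$ are then forced by Remark \ref{rem:horizontalDistance}. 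The main obstacle is the image computation in Step 2: one must carefully track how the inclusion $\NS^2 \hookrightarrow K(\Z,2)$ interacts with the mapping--space fibration in order to pin down the factor of $2$; everything else in the argument is a formal diagram chase.
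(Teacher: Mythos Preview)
Your overall strategy --- analyse the connecting homomorphism of the fibration by factoring through $\pi_j(\Bundles(kc))$ --- is the same as the paper's. However, there is a genuine gap in Step~1 and a substantial vagueness in Step~2.

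\textbf{Step 1 overstates Eliashberg.} The paper's Lemma~\ref{lem:Eliashberg} only asserts that $\Cont(\xi)\hookrightarrow\Planes(\xi)$ is \emph{surjective} on all homotopy groups (and bijective on $\pi_0$); the full weak equivalence holds only after fixing an overtwisted disc, a hypothesis not available here. Without injectivity the five--lemma does not yield $\Cartan(c,k,[\xi])\simeq\FCartan(c,k,[\xi])$, so your reduction to the middle fibration is unjustified. The paper sidesteps this by working directly with the long exact sequence of the \emph{top} fibration $\Cover(c,k)\to\Cartan(c,k,[\xi])\to\Cont(\xi)$ and using only surjectivity from Eliashberg: a class in $\pi_{j-1}(\Cover(c,k))$ is shown to lie in the image of the connecting map by first producing a sphere of plane bundles (Lemma~\ref{lem:surjectivity}), then a sphere of plane fields with the prescribed Euler class (Proposition~\ref{prop:planeEulerClass}), and finally lifting to contact structures via the surjectivity in Lemma~\ref{lem:Eliashberg}.

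\textbf{Step 2 is where the content lies, and it is not carried out.} Your claim that the image of $\partial_j$ is exactly $2\,\pi_{j-1}(\Cover(c,k))$ is correct, but the phrase ``a direct analysis \dots\ combined with the Postnikov tower'' is not a proof. In the paper this is the substance of Lemmas~\ref{lem:injectivity} and~\ref{lem:surjectivity} (which together show that $\partial^\infty_j$ is a bijection) and of Proposition~\ref{prop:planeEulerClass} (which pins down which classes in $\pi_j(\Bundles(kc))$ come from $\pi_j(\Planes(\xi))$: precisely the even Euler classes). In particular your ``Consequently $\ker(\partial_j)=\pi_j^{\trivial}(\Cont(\xi))$'' needs the injectivity of $\partial^\infty_j$, which you have not established; knowing only the image of $\partial_j$ does not determine its kernel. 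Finally, in Step~3 note that Example~\ref{ex:Cartan} produces a section into $\Cartan(kc,1,[\xi])$, not into $\Cartan(c,k,[\xi])$ for general $k$; to split the sequence one must fix a basepoint $\tau\in\Cover(c,k)$ and pull back, as the paper does.
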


The proof relies on understanding the inclusion $\pi_j(\Cont(\xi)) \to \pi_j(\Bundles(kc))$. For an arbitrary contact structure $\xi$ this is very difficult. However, if $\xi$ is assumed to be overtwisted the problem simplifies considerably. This flexibility is provided by the well--known result of Eliashberg regarding the classification of overtwisted contact structures:
\begin{lemma}[\cite{El}] \label{lem:Eliashberg}
Let $\xi$ be an overtwisted contact structure. The inclusion
\[ \Cont(\xi) \to \Planes(\xi) \]
is surjective in all homotopy groups, where $\xi$ is assumed to be the basepoint. Additionally, this map is a bijection at the level of connected components. 
\end{lemma}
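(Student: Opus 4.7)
The plan is to deduce both claims from a single parametric relative h--principle, which is the technical core of Eliashberg's classification in \cite{El}. Fix once and for all a small ball $B \subset N$ in which $\xi$ equals a standard overtwisted germ $\xi_{\mathrm{OT}}$ around an overtwisted disk $\Delta$. The h--principle to establish is: given a map of pairs
\[ \Phi : (D^k, \partial D^k) \longrightarrow (\Planes(\xi), \Cont(\xi)) \]
with $\Phi(s)|_{B} = \xi_{\mathrm{OT}}$ for every $s \in D^k$, $\Phi$ admits a homotopy rel $\partial D^k$ and rel $B$ to a map whose image lies in $\Cont(\xi)$. Applied for $k = 0, 1, 2, \ldots$ this yields surjectivity on each $\pi_k$; applied for $k = 1$ with both endpoints sent to prescribed contact structures it yields the injectivity half of the $\pi_0$ statement.

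The first step is a standardisation. Using parametric Gray stability together with the uniqueness of a Darboux--type neighbourhood of an overtwisted disk inside an overtwisted contact manifold, one deforms $\Phi$ so that each $\Phi(s)$ is already contact on an enlarged open set $B' \supset B$. The problem is thereby reduced to deforming a family of plane fields on $M' := N \setminus B'$, contact on $\Op(\partial M')$, into a family of contact structures, rel its boundary behaviour.

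The second step is a skeletal extension followed by absorption. Pick a smooth triangulation of $M'$. On a neighbourhood of the $2$--skeleton any family of plane fields of bounded parameter dimension can be perturbed to satisfy the contact condition, since integrability along $1$-- and $2$--cells is non--generic. The only obstruction to extending across a $3$--cell $\sigma$ is encoded by the rotation number of the plane field on $\partial \sigma$ with respect to the contact framing already fixed on the surrounding $2$--skeleton. Whenever this obstruction is non--trivial, pick a transverse arc from $\sigma$ to $B$ and perform a parametric half--Lutz twist along a tubular neighbourhood of the arc. The twist changes the rotation class along the arc by the required amount at the cost of producing an extra overtwisted disk near $\Delta$; this extra disk is absorbed back into the standard model using the uniqueness of neighbourhoods of two coaxial overtwisted disks. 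Iterating over $3$--cells kills all obstructions.

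The main difficulty is carrying the standardisation and absorption steps out \emph{parametrically} in $s \in D^k$: both the standard--neighbourhood theorem for overtwisted disks and the Lutz--absorption move require genuine family versions, and a naive cell--by--cell argument does not suffice once the parameter dimension grows. This parametric control is the technical heart of \cite{El} in dimension three (and of its reformulation via the Borman--Eliashberg--Murphy framework); I would import it as a black box rather than reproduce it, and then deduce the stated surjectivity and $\pi_0$--bijectivity as corollaries of the relative h--principle above.
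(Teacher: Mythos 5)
The paper does not prove this lemma: it is Eliashberg's classification theorem, quoted directly from \cite{El}, and the author explicitly remarks after the statement that the stronger weak-homotopy-equivalence version requires fixing the overtwisted disc. So there is no in-paper proof to compare against. Judged on its own, your proposal is an outline of Eliashberg's strategy in which the entire technical content --- the parametric standardisation near the overtwisted disc and the parametric extension/absorption over cells --- is explicitly imported as a black box. Since that black box \emph{is} the theorem, the proposal does not constitute a proof; it is essentially a reformulation of the citation.

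Beyond that, the relative $h$--principle you formulate is stronger than the lemma and stronger than what is true without fixing the disc. You require $\Phi(s)|_B=\xi_{\mathrm{OT}}$ for \emph{every} $s\in D^k$, including $s\in\partial D^k$, where $\Phi(s)$ is an arbitrary element of $\Cont(\xi)$; arranging this for the boundary family requires a continuous family of overtwisted discs over a sphere of overtwisted contact structures, which is exactly the obstruction that prevents $\Cont(\xi)\to\Planes(\xi)$ from being a weak homotopy equivalence. If your $h$--principle held as stated it would give $\pi_k(\Planes(\xi),\Cont(\xi))=0$ for all $k$, i.e.\ injectivity as well as surjectivity on all homotopy groups, which is neither claimed nor expected. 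The correct route is Eliashberg's: prove the weak homotopy equivalence for the spaces \emph{with fixed overtwisted disc} (where your condition on $B$ is legitimate for all $s$), and then deduce surjectivity on $\pi_k$ and bijectivity on $\pi_0$ for the free spaces by first homotoping a based sphere of plane fields (respectively, each of two given overtwisted structures) to be standard on $B$. As a secondary point, the cell-by-cell extension with Lutz-type corrections along arcs is closer to the modern higher-dimensional framework than to the argument of \cite{El}, which proceeds by simplifying families of characteristic foliations on embedded $2$--spheres; either can presumably be made to work, but neither is carried out here.
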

The inclusion is a weak homotopy equivalence if one additionally fixes the overtwisted disc, but we will not need this fact.

\subsection{Proof of the theorem}

\subsubsection{The connecting morphism for bundles}

The next two lemmas show that the connecting morphism is a bijection in the case of bundles. 

\begin{lemma} \label{lem:injectivity}
The connecting morphism $\pi_j(\Bundles(kc)) \to  \pi_{j-1}(\Cover(c,k))$ is injective.
\end{lemma}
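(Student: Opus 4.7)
The plan is to use exactness of the long exact sequence of the fibration
\[ \Cover(c,k) \longrightarrow \FCartan^\infty(c,k) \longrightarrow \Bundles(kc). \]
Injectivity of $\partial \colon \pi_j(\Bundles(kc)) \to \pi_{j-1}(\Cover(c,k))$ is equivalent to the inclusion-induced map $\pi_j(\FCartan^\infty(c,k)) \to \pi_j(\Bundles(kc))$ being identically zero; this is what I will verify. Since $\pi_j(\Bundles(kc)) \cong H^{2-j}(N, \Z)$ vanishes for $j \geq 3$, I only need to handle $j = 1, 2$.

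First I will unpack a point of $\FCartan^\infty(c,k)$ as a pair $(g, \tilde f)$, where $g \colon N \to \Gr(2, \infty)$ is a classifying map and $\tilde f \colon N(c) \to g^*\V(2, \infty)$ is a fibrewise orientation-preserving $k$-fold cover of oriented $\NS^1$-bundles over $N$. A based map $\alpha \colon (\NS^j, \ast) \to \FCartan^\infty(c,k)$ then adjoins to a classifying map $G \colon N \times \NS^j \to \Gr(2, \infty)$ together with a global $k$-fold cover $\tilde F \colon N(c) \times \NS^j \to \mathcal{E}$ of oriented $\NS^1$-bundles over $N \times \NS^j$, where $\mathcal{E} := G^*\V(2, \infty)$.

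The heart of the argument is the following observation: a $k$-fold cover of oriented principal $\NS^1$-bundles must intertwine the actions via the $k$-th power map $\NS^1 \to \NS^1$, and the induced map $B\NS^1 \to B\NS^1$ acts as multiplication by $k$ on $H^2$. Consequently
\[ e(\mathcal{E}) \;=\; k \cdot e\bigl(N(c) \times \NS^j \to N \times \NS^j\bigr) \;=\; \pr_N^*(kc) \in H^2(N \times \NS^j, \Z). \]
By K\"unneth, $H^2(N \times \NS^j, \Z) \cong H^2(N, \Z) \oplus H^{2-j}(N, \Z) \otimes H^j(\NS^j, \Z)$. Under the Thom-type identification $\pi_j(\Bundles(kc)) \cong H^{2-j}(N, \Z)$, the class represented by the composition $\NS^j \to \FCartan^\infty(c,k) \to \Bundles(kc)$ corresponds, via the mapping-space adjunction, precisely to the transverse K\"unneth summand of $e(\mathcal{E})$. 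Since $\pr_N^*(kc)$ lies entirely in the first summand, this transverse component vanishes, so $\alpha$ represents zero in $\pi_j(\Bundles(kc))$. Exactness then yields the injectivity of $\partial$.

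The only step requiring a little care is the naturality check that the adjoint of a map $\NS^j \to \Maps(N, K(\Z, 2))$ does read off the claimed K\"unneth component of the Euler class on $N \times \NS^j$; this is a standard naturality statement once one recalls $\Bundles \simeq \Maps(N, K(\Z, 2))$ and that the Euler class corresponds to the fundamental class of $K(\Z, 2)$. Everything else is formal.
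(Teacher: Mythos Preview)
Your argument is essentially the paper's own: both show that $\pi_j(\FCartan^\infty(c,k)) \to \pi_j(\Bundles(kc))$ vanishes because the underlying $\NS^j$-family of circle bundles, being $k$-covered by the \emph{constant} family $N(c) \times \NS^j$, must have Euler class $\pr_N^*(kc)$ over $N \times \NS^j$ and hence no transverse K\"unneth component. The paper compresses this to a single phrase (``functoriality of the Euler class''); your unpacking via K\"unneth and the Thom identification $\pi_j(\Bundles(kc)) \cong H^{2-j}(N,\Z)$ is the same reasoning made explicit.

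One correction, however: a $k$-sheeted fibrewise submersion between oriented $\NS^1$-bundles need \emph{not} intertwine the principal actions via $z \mapsto z^k$. The elements of $\FCartan^\infty(c,k)$ are only required to be fibrewise orientation-preserving submersions, not equivariant maps, so your ``must intertwine'' assertion is false as stated. The Euler class identity $e(\mathcal{E}) = k \cdot \pr_N^*(c)$ that you actually need is nonetheless correct for any fibrewise degree-$k$ map: you can justify it either by first homotoping to an equivariant cover (this is exactly what Lemma~\ref{lem:retractions} provides) or directly from naturality of the Serre spectral sequence, where a degree-$k$ map on the fibre induces multiplication by $k$ on $E_2^{0,1}$ and the transgression of the fibre generator is the Euler class. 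So the step stands, but its justification should be adjusted.
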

\begin{proof}
Equivalently, we will show that the morphism $\pi_j(\FCartan^\infty(c,k)) \to \pi_j(\Bundles(kc))$ is zero. Take any element in $\pi_j(\FCartan^\infty(c,k))$, and find a representative $K \subset \FCartan^\infty(c,k)$. The class $[K]$ maps to $[\xi]$, where $\xi$ is the $j$--sphere of $2$--plane bundles underlying $K$. From the definition of formal Cartan prolongation (for bundles), we have that the circle bundle of $\xi$ is $k$--covered by the trivial family of circle bundles based on $N(c)$. We deduce (for instance, using functoriality of the Euler class) that $\xi$ must be trivial as a family as well, so $[\xi] = 0$.
\end{proof}

Similarly:

\begin{lemma} \label{lem:surjectivity}
The connecting morphism $\pi_j(\Bundles(kc)) \to \pi_{j-1}(\Cover(c,k))$ is surjective.
\end{lemma}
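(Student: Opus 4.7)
The plan is to show surjectivity by identifying both source and target of $\partial_j$ canonically with $H^{2-j}(N, \Z)$ and verifying that $\partial_j$ corresponds to the identity under these identifications.

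Since $\Bundles(kc)$ is a connected component of $\Maps(N, K(\Z, 2))$, the standard homotopy calculation gives $\pi_j(\Bundles(kc)) \cong H^{2-j}(N, \Z)$. Combined with Lemma \ref{lem:homotopy}, which identifies $\pi_{j-1}(\Cover(c,k)) \cong H^{2-j}(N, \Z)$, the cases $j \geq 3$ are automatic, as both sides vanish. I would therefore focus on $j = 1$ and $j = 2$, constructing explicit preimages under $\partial_j$ of the generators of $\pi_{j-1}(\Cover(c,k))$.

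For $j = 1$, given $\alpha \in H^1(N, \Z)$ represented by $g: N \to S^1$, I would take the loop $\xi_t$ of $2$-plane bundles in $\Bundles(kc)$ whose total bundle $\tilde\xi$ over $N \times S^1$ has Euler class $kc + \alpha \cup dt$. A lift of this loop to a path $f_t$ in $\FCartan^\infty(c,k)$ starting at a basepoint $\tau_0 \in \Cover(c,k)$ exists by the fibration property, and its endpoint satisfies $f_1 = g \cdot \tau_0$, because the holonomy of $\tilde\xi$ around $\{p\} \times S^1$ acts on each $S^1$-fibre by multiplication by $g(p)$. By Remark \ref{rem:horizontalDistance}, the horizontal distance from $\tau_0$ to $f_1$ is exactly $\alpha$, so $\partial_1 [\xi_t] = \alpha$.

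For $j = 2$, the generator of $\pi_2(\Bundles(kc)) \cong \Z$ is represented by a disc family $\{\xi_s\}_{s \in D^2}$, constant on $\partial D^2$, whose associated $2$-plane bundle over $N \times S^2 = N \times D^2 / (N \times \partial D^2)$ has Euler class $kc \cup 1 + 1 \cup [S^2]$. Lifting to a disc $f_s$ in $\FCartan^\infty(c,k)$, the boundary loop $f_s|_{\partial D^2}$ represents an element of $\pi_1(\Cover(c,k))$. By Remark \ref{rem:horizontalDistance}, its looping number equals the degree of $s \mapsto f_s(P)$ (for a fixed $P \in N(c)$) in the fibre of $N(kc)$ over $\pi(P)$; this degree agrees with the Euler class of $\tilde\xi|_{\{p\} \times S^2}$, which is $1$. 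Hence $\partial_2$ maps a generator to a generator.

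The main obstacle will be the bundle-theoretic bookkeeping in both cases: matching the homotopical invariants parameterising $\pi_j(\Bundles(kc))$ --- the $dt$-part of the Euler class on $N \times S^1$ for $j=1$, and the full Euler class on $N \times S^2$ for $j=2$ --- to the geometric invariants describing $\pi_{j-1}(\Cover(c,k))$, namely horizontal distance and looping number. Both correspondences trace back to the classical interpretation of the Euler class as the obstruction to a section, so once the dictionaries are set up carefully, the verifications are immediate.
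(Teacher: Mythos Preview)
Your proposal is correct and follows essentially the same route as the paper: both identify $\pi_j(\Bundles(kc))$ and $\pi_{j-1}(\Cover(c,k))$ with $H^{2-j}(N,\Z)$ via the Euler class over $N\times\NS^j$ and the horizontal distance/looping number respectively, and then verify that the connecting morphism is the identity by tracking the Euler class of the total family (over tori $\gamma\times\NS^1$ for $j=1$, over $\{p\}\times\NS^2$ for $j=2$). The paper spells out the $j=1$ verification a bit more explicitly --- pulling back along $N\times[0,1]\to N\times\NS^1$ and reading off the gauge transformation from the Euler class on each torus --- but the substance is identical to what you outline.
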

\begin{proof}
Take $\xi$ to be the basepoint in $\Bundles(kc)$ and fix a lift $\tau \in \FCartan^\infty(c,k)$. Take a class $G \in \pi_{j-1}(\Cover(c,k))$, which we can think of as a homotopy class in the gauge transformations of $\xi$, by Lemma \ref{lem:retractions}.

Lemma \ref{lem:homotopy} implies that $G$ is given by a cohomology class $g \in H^{2-j}(N,\Z)$. Similarly, $\xi$ is given, up to bundle isomorphism, by its Euler class $e \in H^2(N,\Z)$. Recall that the K\"unneth formula yields an isomorphism
\[ H^{2-j}(N,\Z) \oplus H^2(N,\Z) \overset{(\alpha,\beta)}\longrightarrow H^2(N \times \NS^j, \Z). \]
Consider the unique, up to homotopy, $j$--sphere $K$ of bundles based on $\xi$ and having $\alpha(g) + \beta(e)$ as its Euler class when regarded as a plane bundle over $N \times \NS^j$. We claim that the connecting morphism maps $[K]$ to $G$.

Take $j=1$. Write $P: N \times \NS^1 \to N$. Write $Q: N \times [0,1] \to N \times \NS^1$ for the obvious quotient map. There is a unique, up to homotopy, isomorphism between $Q^*K$ and $Q^*P^*\xi$ extending the identification $(Q^*K)|_{N \times \{0\}} = \xi$. The identification of $(Q^*K)|_{N \times \{0\}}$ with $(Q^*K)|_{N \times \{1\}}$ yields a gauge transformation $\phi$ of $\xi$.

We claim that $\phi$ is a representative of $G$. Recall that the Euler class of a $2$--plane bundle over the torus can be computed as follows: find a section over the complement of the meridian $\gamma$ and compare the degrees of the two resulting sections over $\gamma$. Let now $\gamma$ be some embedded loop in $N$, and let $T$ be the corresponding torus on $N \times \NS^1$. By construction, $K|_T$ has Euler class $\alpha(g)|_T$, which implies that $\phi|_\gamma$ is described by $g|_\gamma$. Since gauge transformations are characterised by their action over loops, the claim follows.

The case $j=2$ is similar. In that case, we have to study what happens over a single point $x \in N$ and the corresponding sphere $\{x\} \times \NS^2$. 
\end{proof}

\subsubsection{Non--trivial families of plane fields} \label{ssec:nonTrivialPlanes}

The following proposition shows that there are many families of plane fields which are non--trivial as families of vector bundles.

\begin{proposition} \label{prop:planeEulerClass}
Let $d_j =2v_j \in H^2(N \times \NS^j, \Z)$, $j=1,2$. Fix $\xi \in \Planes$ with Euler class $d_j|_N$. Then, there is a sphere $K_j$ in $\Planes$ based at $\xi$ whose Euler class as a $2$--plane bundle over $N \times \NS^j$ is $d_j$.
\end{proposition}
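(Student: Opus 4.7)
The plan is to build $K_j$ via local modifications of $\xi$ supported near a submanifold representing the Poincar\'e dual of the ``extra'' K\"unneth component of $v_j$. I would begin by applying the K\"unneth formula to decompose $v_j = a + w$ with $a \in H^2(N, \Z)$ and $w \in H^{2-j}(N, \Z)$. The hypothesis $d_j|_N = e(\xi)$ forces $e(\xi) = 2a$, which is already guaranteed by the choice of basepoint $\xi$, so the real task is to realise the extra Euler class $2w$ as the ``new'' component along the $\NS^j$ factor.

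For $j = 2$, write the extra class as $2c \in H^0(N, \Z) = \Z$. I would pick a ball $B \subset N$ and homotope $\xi$ so that $\xi|_B \equiv v_0 \in \NS^2$ is constant. Inside $B$, I would define a $2$--parameter family $\{\xi_s\}_{s \in \NS^2}$ of plane fields (agreeing with $\xi$ outside $B$) using the following local model: take a bump profile $\rho: B \to [0,1]$ vanishing near $\partial B$ and equal to $1$ near the centre, and let $\xi_s(x)$ be determined by rotating $v_0$ through an angle proportional to $\rho(x)$ in the direction specified by a degree--$2c$ map $\NS^2 \to \NS^2$ applied to $s$. For $j = 1$, I would represent $w \in H^1(N, \Z)$ by a Poincar\'e--dual embedded surface $\Sigma \subset N$, pick a tubular neighbourhood $\Sigma \times (-1, 1)$, and build the loop $\xi_t$ by twisting $\xi$ around $\Sigma$ with a $4\pi$--rotation in $\SO(3)$, i.e.\ the doubled generator of $\pi_1(\SO(3))$ which closes up to a genuine loop in $\Planes$.

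The Euler class of the resulting family on $N \times \NS^j$ would then be computed by restriction. Restriction to $N \times \{\text{pt}\}$ recovers $e(\xi) = 2a$. Restriction to a test cycle $\gamma \times \NS^j$, with $\gamma \subset N$ dual to $w$, yields a map whose degree one computes to be $2\langle w, [\gamma] \rangle$ directly from the local model. The factor of $2$ reflects the Euler class $2$ of $T\NS^2$; equivalently, it comes from the fact that the boundary map $\pi_2(\NS^2) \to \pi_1(\SO(2))$ in the fibration $\SO(2) \to \SO(3) \to \NS^2$ is multiplication by $2$, which both local models exploit via the $4\pi$--rotation (for $j=1$) or its $\NS^2$--parameter analogue (for $j=2$).

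The main obstacle will be carrying out the Euler class computation precisely and ensuring that gluing the local model into $\xi$ does not produce spurious contributions coming from higher homotopy of $\NS^2$, e.g.\ from $\pi_3(\NS^2) = \Z$ (the Hopf invariant) or $\pi_4(\NS^2) = \Z_2$. Since $\dim(N \times \NS^j) \in \{4, 5\}$, these obstructions live in low--dimensional cohomology groups and can be neutralised by adjusting the local model within its homotopy class, but the bookkeeping needs to be done carefully.
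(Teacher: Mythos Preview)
Your strategy---K\"unneth decomposition, then a local modification supported near a Poincar\'e dual submanifold, with the factor of $2$ attributed to $e(T\NS^2)=2$ (equivalently, to the connecting map $\pi_2(\NS^2)\to\pi_1(\SO(2))$ being multiplication by $2$)---is sound in spirit and captures the same mechanism the paper exploits. The paper, however, implements it quite differently: for $j=1$ it runs a cell--by--cell obstruction theory argument on a CW decomposition of $N\times\NS^1$ (prescribing degrees on $2$--cells, then killing the single $\pi_3(\NS^2)$ obstruction on the top cell by modifying over a $3$--cell), while for $j=2$ it uses a global clutching construction, gluing two copies of $N\times\D^2$ by a map $\Phi:N\times\NS^1\to\SO(3)$ that fibrewise rotates $\xi$; the even degree ensures $\Phi$ is nullhomotopic in $\SO(3)$, so the ambient $\R^3$--bundle remains trivial while the $2$--plane acquires the desired Euler class. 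Your $j=2$ idea is close to this clutching picture; your $j=1$ idea (localising near a Poincar\'e dual surface) is more geometric than the paper's obstruction theory, and could in principle be made to work.

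That said, your $j=1$ local model as stated has a genuine gap. Applying a loop $R_t$ of rotations in $\SO(3)$ to $\xi$ near $\Sigma$---even a $4\pi$--rotation---produces a loop of plane fields that is \emph{nullhomotopic}: the very fact that the $4\pi$--loop bounds in $\SO(3)$ gives a nullhomotopy $\xi_{t,s}=R_{t,s}\cdot\xi$ of your family, so its Euler class over $N\times\NS^1$ has no component along $\NS^1$. The $\SO(3)$--contractibility argument is the engine of the $j=2$ clutching construction (where it guarantees triviality of the ambient $\R^3$--bundle after gluing), but for $j=1$ it works against you. What you actually need near each point of $\Sigma$ is a map $([-1,1]\times\NS^1,\partial)\to(\NS^2,v_0)$ of degree $1$, i.e.\ the generator of $\pi_2(\NS^2)$; the Euler class then comes out as $2\langle w,[\gamma]\rangle$ automatically from $e(T\NS^2)=2$, with no reference to $\SO(3)$. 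Once you phrase it this way, the obstructions to spreading the bubble coherently over $\Sigma$ are exactly the $\pi_3(\NS^2)$ and $\pi_4(\NS^2)$ issues you already flag, and can be neutralised just as you suggest (this is precisely what the paper's obstruction--theoretic proof does over the top cell). A minor further slip: in your $j=2$ model the map $\NS^2\to\NS^2$ should have degree $c$, not $2c$, since the Euler class is twice the degree.
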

\begin{proof}
Assume $j=1$. Take a CW--decomposition of $N$ with only one top cell. Take the CW--decomposition of $\NS^1$ with a single $1$--cell and $x$ the unique $0$--simplex. Denote by $\ST$ the product CW--decomposition in $N \times \NS^1$. Write $\ST^* \subset \ST$ for the collection of cells not contained in $N \times \{x\}$. Deform $\xi$ to be constant (as a map into the Grassmannian) over the $1$--skeleton of $N$. We define $(K_1)|_{N \times \{x\}} = \xi$ and we aim to extend it to $\ST^*$. 

Over the $1$--cells, $K_1$ can be defined to be constant, like $\xi$. Over a $2$--cell $\Delta_2$, we define it to be a map into $\NS^2$ of degree $\phi_1(\Delta_2)$, where $[\phi_1] = v_1$. This provides the desired Euler class. Over the $3$--skeleton of $\ST^*$, the obstruction for extending is given by $d\phi_1$ by construction, which evaluates zero over the cells of $\ST^*$. This leaves the single $4$--cell $\Delta_4$ to fill. The obstruction is the degree of the map $K_1: \partial \Delta_4 \to \NS^2$. We can trace back our steps and modify $K_1$ over some $3$--cell to make sure this degree is zero.

Assume $j=2$. Then, the isomorphism:
\[ H^2(\NS^2,\Z) \oplus H^2(N,\Z) \overset{(\alpha,\beta)}\longrightarrow H^2(N \times \NS^2, \Z), \]
indicates that we can simply compute the Euler class of any plane bundle by evaluating separately on $N \times \{x\}$ or $\{p\} \times \NS^2$. Take the manifold $N \times \D^2$: over it, we have (the pullback of) the bundle $TN$ which is trivisalised as the trivial $\R^3$--bundle; as a 2--distribution inside, we define $K_2$, which is $\D^2$--invariant and equal to $\xi$ on every $N \times \{x\}$. We aim two glue two copies of $N \times \D^2$ so that the glued copies of $K_2$ have the desired Euler class when restricted to each $\{p\} \times \NS^2$.

Consider the loop $\NS^1 \to \SO(2)$ realising the Euler class $\alpha^{-1}(d_2) \in H^2(\NS^2,\Z)$ through the clutching construction and denote by $\phi: \NS^1 \to \SO(3)$ its inclusion into $\SO(3)$. Observe that, since $d_2=2v_2$, $\phi$ is contractible in $\SO(3)$. We can define then another map $\Phi: N \times \NS^1 \to \SO(3)$ so that:
\begin{itemize}
\item $\Phi|_{\{p\} \times \NS^1} = \phi$, up to a $\SO(3)$--transformation that only depends on $p$,
\item $\Phi$ fixes (not pointwise) the plane $\xi$.
\end{itemize}
What we are essentially saying is that $\phi$ was a family of rotations of the $XY$--plane that was lifted to $\R^3$, and $\Phi$ is a $p$--dependent family of rotations that looks the same but, instead, the plane that $\Phi|_{\{p\} \times \NS^1}$ rotates is $\xi_p$. Since $\phi$ was contractible, so is $\Phi$, so the resulting $\R^3$--bundle is trivial. However, in each $\{p\} \times \NS^2$ the restriction of $\xi$ has been twisted to have Euler class $\alpha^{-1}(d_2)$, proving the claim. 
\end{proof}

\subsubsection{The proof of the theorem} \label{ssec:nonTrivialContact}

Let $\xi$ be an overtwisted contact structure and fix $\tau \in \Cartan(c,k,\xi)$ a basepoint in the fibre over $\xi$. The existence of $\tau$ identifies the connected components of $\Cartan(c,k,\xi) \cong\Cover(c,k)$ with $H^1(N,\Z)$, as in Lemma \ref{lem:homotopy}.

Let us study the connecting morphism 
\[ \pi_1(\Cont(\xi)) \to \pi_0(\Cover(c,k)). \]
Using Lemma \ref{lem:injectivity} we deduce that its kernel is the space $\pi_1^\trivial(\Cont(\xi))$. Let us compute its image. Let $g \in \Cover(c,k)$ and denote by $\nu$ the corresponding prolongation in $\Cartan(c,k,\xi)$. By Lemma \ref{lem:surjectivity} there is a loop of vector bundles, all of them of Euler class $kc$, producing the class of $g$ through the connecting morphism $\pi_1(\Bundles(kc)) \to \pi_0(\Cover(c,k))$. By Proposition \ref{prop:planeEulerClass}, this loop can be realised by a loop of plane fields based on $\xi$ if and only if $[g] \in H^1(N,\Z)$ is even. Then, Lemma \ref{lem:Eliashberg} allows us to turn this into a loop of contact structures $\xi_t$, $t \in \NS^1$ based on $\xi_1 = \xi$. 

The case $\pi_2(\Cont(\xi)) \to \pi_1(\Cover(c,k))$ is analogous. \hfill $\Box$

\section{The classification up to Engel homotopy} \label{sec:Engel}

The previous section dealt with Cartan prolongations. We will now allow homotopies through more general Engel structures (that are, however, still tangent to the fibre direction, so they can be regarded as \emph{generalised prolongations}). 

\subsection{A warm--up exercise} \label{ssec:exercise}

Let us start by working out a particular case which is of interest and that follows easily using the language of Section \ref{sec:prolongations}.

\begin{theorem} \label{thm:exercise}
Let $K$ a topological space. Let $\phi_0, \phi_1: K \to \Cartan(c)$ be two continuous maps and let $\phi_s: K \to \FCartan(c)$, $s \in [0,1]$, be a homotopy between them. Then $\phi_s$ can be $C^\infty$--approximated, relative to its ends, by a homotopy with image in $\Engel(N(c))$.
\end{theorem}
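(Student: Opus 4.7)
The approach is to exploit the dichotomy provided by Proposition \ref{prop:key}: a formal Cartan prolongation $\SD$ is Engel at a point $(p,t)$ provided that either (1) the associated curve $X_p: \NS^1 \to \NS^2$ is locally convex or concave at $t$, or (2) the projected plane field $\xi_s := d\pi([\SD,\SD]) = \mathrm{span}(X, X')$ on $N$ is contact at $p$. The endpoints $\phi_0, \phi_1$ satisfy (2) since they are Cartan prolongations. The intermediate $\phi_s$ might fail both conditions on some compact region, and I plan to modify them there by perturbing the curves into convex ones.

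Over local trivialisations $\D^3 \times \NS^1 \subset N(c)$, every $\phi_s(\lambda)$ is described by a family of immersed curves $X_{(p,\lambda,s)}: \NS^1 \to \NS^2$ continuous in its parameters; the immersion property follows from $[\SD,\SD]$ being a $3$-distribution, by Proposition \ref{prop:key}. Since being contact is $C^1$-open and $\xi_{(\lambda,0)}, \xi_{(\lambda,1)}$ are contact (reducing to a compact subset of $K$ as needed), there exists $\epsilon > 0$ such that for $s \in [0,\epsilon] \cup [1-\epsilon,1]$ the base plane field $\xi_{(\lambda,s)}$ remains contact; on this range I would set $\tilde\phi_s = \phi_s$.

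On the middle interval $[2\epsilon, 1-2\epsilon]$ I would replace each $X_{(p,\lambda,s)}$ by a nearby convex curve $\tilde X_{(p,\lambda,s)}$ using a parametric version of the curve-convexification technique at the heart of the existence $h$-principle of \cite{CPPP}: one adds small-amplitude, high-frequency oscillations along $\NS^1$ that inject positive geodesic curvature at every point, preserving the turning number, and varying continuously in $(p,\lambda,s)$. On the transition zones $s \in [\epsilon, 2\epsilon]$ and $s \in [1-2\epsilon, 1-\epsilon]$ I would ramp the oscillation amplitude smoothly from $0$ to its full value via a cutoff. While the amplitude is small, the base plane field is a $C^0$-small perturbation of the contact $\xi_{(\lambda,\epsilon)}$ and is therefore still contact, so condition (2) is maintained; once the amplitude exceeds the convexification threshold, condition (1) takes over. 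Provided the two regimes overlap, which can be arranged by shrinking $\epsilon$ and choosing the ramp carefully, the resulting $\tilde\phi_s$ is Engel for every $s$.

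The main obstacle is precisely this hand-off in the transition zones: producing the convexifying perturbation in a genuinely parametric (in $\lambda, s$) and global (across a trivialising cover of $N(c) \to N$) manner, while quantitatively controlling how it disturbs the contact condition on $\xi$, so that the ``still contact'' and ``already convex'' regimes overlap. This is the technical heart of the argument, and is exactly what the convex-integration technology developed in \cite{CPPP} is designed to supply; the fact that the original $\phi_s$ is already Engel at the endpoints means we only need it to operate in a parametric-relative form, which is why the statement can be obtained without going through the full relative $h$-principle.
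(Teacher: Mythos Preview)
Your approach has a genuine gap, and it also misses a much simpler argument. The convexification you describe---``small-amplitude, high-frequency oscillations that inject positive geodesic curvature at every point''---does not do what you claim: an oscillatory normal perturbation adds geodesic curvature of alternating sign, so the resulting curve is not convex. The loop-adding technique of \cite{CPPP} (cf.\ Lemma~\ref{lem:Little}) \emph{does} convexify, but it changes the turning number and is only $C^0$-close to the original curve, never $C^\infty$-close; it therefore cannot deliver the $C^\infty$-approximation the statement demands. Your transition-zone argument has a further slip: you assert that a $C^0$-small perturbation of $\xi$ remains contact, but the contact condition is only $C^1$-open, and a high-frequency perturbation of $X$ moves $X'$ (and hence $\langle X,X'\rangle$) by a large amount.

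The paper's proof exploits a feature of formal Cartan prolongations that you overlook: each curve $X_p$ is already a \emph{great circle}, since by definition it lies in the plane $\xi_p \subset T_pN$. A great circle pushed by any $\epsilon>0$ in its normal direction $\nu_p$ becomes a small circle of positive geodesic curvature---precisely the curve of a Lorentz prolongation. Concretely, one replaces the tautological map $f_{x,s}$ by $(f_{x,s} + h(s)\nu_{x,s})/|f_{x,s} + h(s)\nu_{x,s}|$, where $h$ vanishes to infinite order at $s=0,1$ and is positive in between. The resulting structures are Engel for every $s\in(0,1)$ by condition~(1) of Proposition~\ref{prop:key}, agree with $\phi_0,\phi_1$ at the ends, and become $C^\infty$-close to $\phi_s$ as one scales $h$ toward zero. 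No transition zones, no convex integration, and no hand-off between conditions (1) and (2) are needed.
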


\begin{figure}[ht] 
\centering
\includegraphics{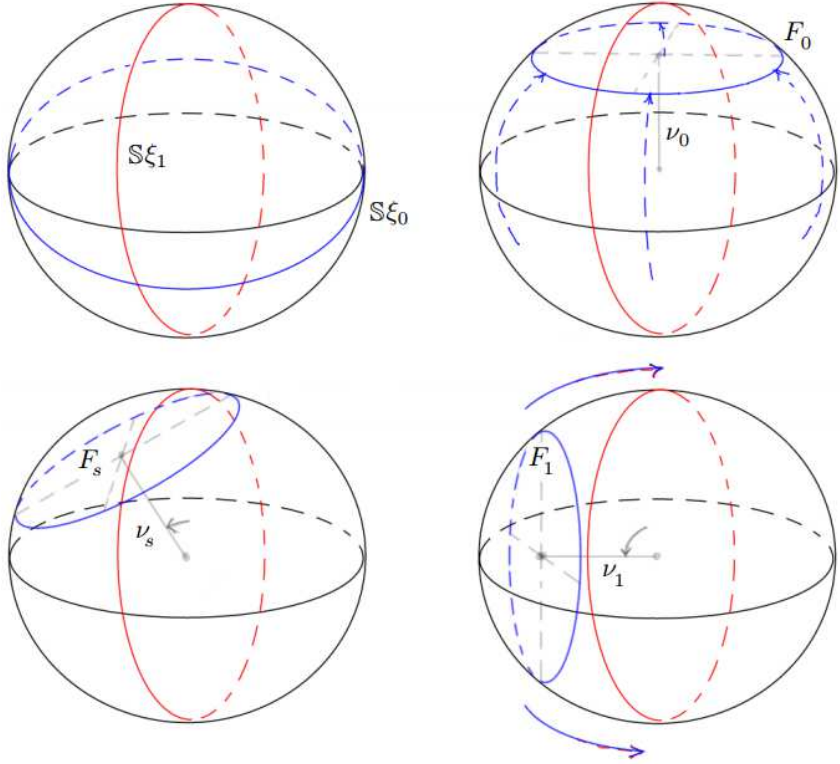}
\caption{$\NS\xi_i$, $i=0,1$, is the circle bundle of the contact structure $\xi_i$. Using a transverse vector field $\nu_i$, we push $\NS\xi_i$ to the cone $F_i$ of a lorentzian metric. $F_0$ and $F_1$ can be connected by a family $F_s$, $s\in [0,1]$, of cones. This produces a homotopy of the corresponding prolongations.}
\label{fig:lorentzian}
\end{figure}

\begin{proof}
For a proof by picture, refer to Figure \ref{fig:lorentzian}. Consider the maps $\phi_s$. There is a corresponding family $K \times [0,1]$ of maps 
\[ f_{x,s}: N(c) \to \mathbb{S}TN \]
\[ f_{x,s}(p,L) = d_{p,L}\pi([\phi_s(x)]) \]
which are simply the tautological maps associated to each formal Cartan prolongation. Write $\xi_{x,s}$ for the oriented contact plane associated to $\phi_s(x)$.

Write $\nu_{x,s}$ for a family of unit vectors in $N$ such that $\nu_{x,s}$ is orthogonal to $\xi_{x,s}$ for each $(x,s) \in K \times [0,1]$. Define a function $h: [0,1] \to \R$ vanishing to all orders on $0$ and $1$ and otherwise satisfying $h(s) > 0$, $s \in (0,1)$. Consider the following deformation of $f$:
\[ F_{x,s} = \dfrac{f_{x,s} + h(s)\nu_{x,s}}{|f_{x,s} + h(s)\nu_{x,s}|}. \]
The tautological distributions associated to $F_{x,s}$ provide a family $\psi_s:K \to \Engel(N(c))$, $s \in [0,1]$; the Engel structures $\psi_s$ are Lorentz prolongations if and only if $s \in (0,1)$. Making $h(s)$ approach zero, $\psi_s$ becomes arbitrarily close to $\phi_s$.
\end{proof}

In particular, the theorem indicates that Engel structures do not seem to recall global contact topology information. This is consistent with the fact that there are Engel cobordisms between contact structures homotopic only as plane fields (see \cite{CPPP}).

\begin{remark}
Theorem \ref{thm:exercise} shows that we can think of Lorentz prolongations as convex push--offs of formal Cartan prolongations. In particular, each Lorentz prolongation has a well defined turning number.
\end{remark}

\subsection{Statement of the main theorem} \label{ssec:main}

Our main result refers to both Cartan and Lorentz prolongations. It reads:
\begin{theorem} \label{thm:main}
Let $K$ be a CW--complex. Let $\phi_0,\phi_1: K \to \Engel(N(c))$ be two continuous maps with image either in the oriented Cartan prolongations or in the Lorentz prolongations. Suppose that both of them have turning number greater or equal to $6$. Then, they are Engel homotopic if and only if they are formally homotopic.
\end{theorem}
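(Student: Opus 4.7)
The \emph{only if} direction is trivial, so I focus on the converse. The plan, already foreshadowed by Section \ref{ssec:exercise} and the introduction, is to transport the question to a statement about families of curves on $\NS^2$ via Proposition \ref{prop:key}, and then invoke the work of Little \cite{Li} and Saldanha \cite{Sal} on spaces of locally convex curves.

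As a first step I would apply the convex push--off of Theorem \ref{thm:exercise} to deform each Cartan endpoint of the pair $\phi_0, \phi_1$ to a nearby Lorentz prolongation of the same turning number via an Engel homotopy. After this reduction, both $\phi_i$ take values in Lorentz prolongations; in particular every $\phi_i(x)$ contains $\partial_t$ and satisfies the Engel condition through alternative (1) of Proposition \ref{prop:key}. I would then deform the given formal homotopy so that every intermediate formal Engel structure also contains $\partial_t$ in its $\mathcal{D}$--component. This preliminary homotopy is unobstructed: the subspace of formal Engel structures with $\partial_t \in \mathcal{D}$ is a weak deformation retract of the subspace of those whose $\mathcal{D}$ meets $\partial_t$ transversely, which is an open neighbourhood of the endpoints.

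Over a trivialising open $U \subset N$ for $N(c) \to N$, the remark following Proposition \ref{prop:key} reinterprets a $\partial_t$--tangent formal Engel structure as a $U$--family of formal immersions of $\NS^1$ into $\NS^2$, and Lorentz prolongations correspond exactly to those families that are locally convex; the turning number is the winding number of the curve. The formal homotopy $\phi_s$ thus becomes a $(K \times [0,1] \times U)$--family of formal immersions, convex at $s = 0, 1$. The critical input, due to Little and Saldanha, is that the inclusion $\Convex_k(\NS^2) \hookrightarrow \FImm_k(\NS^2)$ of locally convex closed curves of winding number $k$ into formal immersions (equivalently, by Smale's $h$--principle, into honest immersions) of the same winding number is a weak homotopy equivalence provided $k \geq 6$. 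Applied parametrically, this turns the family of formal immersions into a family of locally convex immersions, rel endpoints, and hence yields a path of Lorentz prolongations over $U$.

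The remaining task is to globalise these local solutions into an Engel homotopy on all of $N(c)$. I would argue by induction on the cells of a triangulation of $N$ subordinate to a trivialising cover, using the openness of local convexity and the flexibility afforded by the contact alternative (2) of Proposition \ref{prop:key} to control the overlaps between neighbouring charts. The hardest step, in my view, is precisely this globalisation: the Little--Saldanha theorem is a statement about curves on $\NS^2$ and thus local in the base, so packaging it parametrically over $K \times [0,1]$ and coherently gluing the local convex deformations into a single Engel homotopy on $N(c)$ requires careful bookkeeping. The sharp hypothesis $k \geq 6$ presumably comes directly from the threshold above which Saldanha's description of $\Convex_k(\NS^2)$ yields the required equivalence.
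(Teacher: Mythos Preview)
Your proposal has the right architecture --- reduce to Lorentz prolongations, reinterpret as families of curves on $\NS^2$, invoke Little--Saldanha --- but it misidentifies both the key input and the origin of the bound $k\geq 6$, and the globalisation step as you sketch it does not work.

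First, the Little--Saldanha input is not a black--box weak homotopy equivalence $\SL_k \hookrightarrow \FImm_k$ for $k\geq 6$; no such statement appears in \cite{Li} or \cite{Sal}, and indeed the space of locally convex closed curves on $\NS^2$ has components where the inclusion into immersions is \emph{not} a weak equivalence. What the paper actually extracts (Lemma \ref{lem:Little}) is a toolkit of explicit operations: adding small convex loops at prescribed points, the fact that adding two loops is undone by a homotopy through immersions, that for convex curves adding one loop is convex--homotopic to adding three, and that any immersion becomes convex once loops are added at a sufficiently dense set of points. These operations are what drive the proof; they are local in the fibre and can be performed parametrically.

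Second, the hypothesis $k\geq 6$ does not come from any Saldanha threshold. It arises from a covering construction on the base $N$ (Step IV of the paper's proof): one builds a finite cover $\{\SU_j\}$ of $N$ together with sections $\s_j$ of $N(c)|_{\SU_j}$ such that the fibrewise arcs $I_{\s_j}$ of length $2\pi/k$ are pairwise disjoint. The combinatorics of this cover (essentially: two base charts plus a ring of thin charts around the dual knot of $c$) force up to six such arcs to coexist in a single fibre, whence $k\geq 6$. When $c=0$ a single global section suffices and the bound drops to $2$ (Proposition \ref{prop:main}).

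Third, your globalisation sketch invokes ``the contact alternative (2) of Proposition \ref{prop:key}'' to handle overlaps, but that alternative is exactly what is unavailable: the intermediate structures in the homotopy are neither Cartan prolongations nor do they bracket to a contact hyperplane. The paper works entirely within alternative (1). Its globalisation proceeds by first applying Hirsch--Smale chart by chart to make the formal homotopy non--integrable (a family of honest immersions), then adding $2C$ loops at the section $\s_j$ over each $\SU_j$ --- the disjointness of the $I_{\s_j}$ is what prevents these modifications from interfering --- and finally spreading the loops densely along the fibre so that Lemma \ref{lem:Little}(f) forces convexity. The compatibility with the convex endpoints $\psi_0,\psi_1$ is arranged using parts (c), (d), (e) of the lemma, and it is here that the turning number being at least $2$ is used once more (the endpoints must themselves already carry one loop to absorb).

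In short: the missing idea is the disjoint--slab covering of $N(c)$ that lets the loop--adding operations be performed globally and coherently; without it, the local Little--Saldanha flexibility does not patch.
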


If $c=0$, the bound on the turning numbers can be improved and the proof is actually simpler:
\begin{proposition} \label{prop:main}
Let $K$ be a CW--complex. Let $\phi_0,\phi_1: K \to \Engel(N(0))$ be two continuous maps with image either in the oriented Cartan prolongations or in the Lorentz prolongations. Suppose that both of them have turning number greater or equal to $2$. Then, they are Engel homotopic if and only if they are formally homotopic.
\end{proposition}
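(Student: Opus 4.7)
My plan is to reduce Proposition \ref{prop:main} to a parametric $h$--principle for families of closed convex curves on $\NS^2$, following the spirit of the warm--up Theorem \ref{thm:exercise}.

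The first step is to normalize the formal data. Since $c = 0$, the bundle $N(0) = N \times \NS^1$ carries a canonical non--vanishing vertical vector field $\partial_t$, and every Cartan or Lorentz prolongation satisfies $\partial_t \in \SD$. One needs to deform the formal Engel homotopy $\phi_s$ rel $s \in \{0,1\}$ through $\FEngel(N(0))$ so that $\partial_t \in \SD_s(x)$ holds for every $(x,s) \in K \times [0,1]$. Under this condition, fixing a parallelization of the closed oriented $3$--manifold $N$ yields an identification $\NS(TN) \cong N \times \NS^2$, and Proposition \ref{prop:key} identifies the normalized homotopy with a continuous family of formal immersions $X_{x,s,p}: \NS^1 \to \NS^2$ parametrized by $(x,s,p) \in K \times [0,1] \times N$. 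Under this translation, actual Engel structures correspond to families of immersions satisfying pointwise condition (1) or (2) of Proposition \ref{prop:key}, and the common turning number $k$ becomes the rotation number of each $X_{x,s,p}$.

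Next, by Theorem \ref{thm:exercise} applied parametrically over $K$, any Cartan prolongation admits an arbitrarily small $C^\infty$--perturbation to a Lorentz prolongation with the same turning number. Inserting such perturbations near the endpoints of $\phi_s$ lets us assume $\phi_0$ and $\phi_1$ are Lorentz prolongations, i.e.\ each $X_{x,i,p}$ with $i \in \{0,1\}$ is a closed convex curve on $\NS^2$ of rotation number $k \geq 2$. The core ingredient is then the $h$--principle of Little \cite{Li} and Saldanha \cite{Sal}: writing $\Convex_k \subset \FImm_k$ for the spaces of closed convex curves, resp.\ formal immersions $\NS^1 \to \NS^2$ of rotation number $k$, the inclusion $\Convex_k \hookrightarrow \FImm_k$ is a weak homotopy equivalence for $k \geq 2$. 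Applied to the family $K \times [0,1] \times N \to \FImm_k$ produced above, which already lands in $\Convex_k$ when $s \in \{0,1\}$, this delivers a deformation rel $s \in \{0,1\}$ whose image lies entirely in $\Convex_k$; unwinding the identifications yields the desired Engel homotopy through Lorentz prolongations.

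The main obstacle is the normalization step. Since the Grassmannian of $2$--planes containing a fixed line in $T_pM$ is not contractible inside $\Gr(2,4)$, deforming $\phi_s$ rel $s \in \{0,1\}$ into the subspace of formal Engel structures with $\partial_t \in \SD$ requires a careful obstruction argument that exploits the additional flag structure present at the Cartan and Lorentz endpoints, and one must verify that the turning number is preserved throughout. A secondary difficulty is extracting the higher--homotopy, parametric version of Little's classical $\pi_0$ result from Saldanha's stratification of $\FImm_k$ by convexity; the mild bound $k \geq 2$, as opposed to $k \geq 6$ in the general case, reflects the absence of any need for stabilization across bundle charts once $N(c) \to N$ is a product.
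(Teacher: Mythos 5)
Your overall scaffolding matches the paper's Steps I and II: normalize the formal homotopy so that the plane field contains the fibre direction (the paper does this via an almost--quaternionic structure coming from a parallelization), push Cartan endpoints to Lorentz prolongations à la Theorem~\ref{thm:exercise} so that the endpoint families consist of convex curves, and use Smale--Hirsch to translate the formal homotopy into a genuine family of immersions $K \times N \times [0,1] \to \Imm$. Up to there the proposal is sound.

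The final step, however, contains a genuine gap. You invoke ``the $h$--principle of Little and Saldanha'' in the form: the inclusion $\Convex_k \hookrightarrow \FImm_k$ of rotation--number--$k$ convex curves into rotation--number--$k$ (formal) immersions is a weak homotopy equivalence for $k \geq 2$. This is false, and it is not what Little or Saldanha prove. First, ``rotation number $k$'' is not an invariant of immersions $\NS^1 \to \NS^2$: the space of such immersions has $\pi_0 \cong \pi_1(\SO(3)) \cong \Z_2$, so only the parity of $k$ survives. Second, even comparing a single component: Little shows the space of locally convex closed curves on $\NS^2$ has exactly three connected components, and Saldanha computes their homotopy types to be $\Omega\NS^3$ or $\Omega\NS^3 \vee \NS^2 \vee \NS^6 \vee \cdots$, whereas each component of the space of immersions is weakly equivalent to $\R P^3 \times \Omega\NS^3$. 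These already disagree on $\pi_1$, so the inclusion cannot be a weak equivalence, and the relative lifting you need -- deforming the $K \times N \times [0,1]$--family into $\Convex$ rel the endpoints -- is obstructed.

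What the paper actually does in Step~III sidesteps any such equivalence. It uses the concrete ``add small convex loops'' operation and the homotopies catalogued in Lemma~\ref{lem:Little}: item~(f.) lets you add loops at finitely many parameter points $t_0,\dots,t_m$ to convexify the whole family $\Psi$ after a small homotopy, but this alters the endpoints; the hypothesis turning number $\geq 2$ lets you write the endpoint family as $\Phi^{[t\#1]}$ with $\Phi$ convex, and then items~(b.) and~(d.) (sliding loops together and absorbing pairs of loops at a convex base curve) show that the convexified endpoints are homotopic, \emph{through convex curves}, to the original endpoints. This replaces the false global $h$--principle with explicit local homotopies and a parity bookkeeping argument, and it is precisely the mechanism that makes the turning--number bound appear. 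You also misidentify where the real difficulty lies: the normalization to $\partial_t \in \SD$ is handled cleanly by the parallelization argument, whereas the crux is the boundary--matching for the convexification, which your approach does not address.
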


The main ingredient in the proof is the interplay between Engel structures and families of curves in $\NS^2$, as discussed in Subsection \ref{ssec:key}. We will introduce the technical results we need first.

\subsection{Curves in $\NS^2$} \label{ssec:curves}

A curve $\gamma: \NS^1 \to \NS^2$ having no inflection points has an associated Frenet map $\Gamma_\gamma: \NS^1 \to \text{O}(3)$ given at $p$ by the matrix $(\gamma(p), \overset{.}{\gamma}(p)/|\overset{.}{\gamma}(p)|, \mathfrak{n}(p))$, with $\mathfrak{n}: \NS^1 \to \NS^2$ satisfying $\langle\overset{..}{\gamma}(p), \mathfrak{n}(p)\rangle > 0$. We say that $\gamma$ is convex if this matrix lives in $\SO(3)$. We can still define the Frenet map of an immersed curve by requiring $\mathfrak{n}(p)$ to be the unique vector making it lie in $\SO(3)$. Let $\Imm$ be the space of immersions of $\NS^1$ into $\NS^2$. Its formal counterpart $\FImm$, the space of formal immersions, can be identified with $\Maps(\NS^1, \SO(3))$. Denote by $\SL \subset \Imm$ the subspace of convex curves. 

\subsubsection{A result of Little/Saldanha} \label{sssec:Little}

Let $L$ be some compact CW--complex, and let $n$ be a positive integer. Fix maps $f: L \to \Imm$, $t: L \to \NS^1$. We construct a new map $f^{[t\# n]}: L \to \Imm$ using the following procedure: for each $p \in L$ the curve $f(p)$ can be cut at the point $f(p)(t(p))$ and be modified by adding $n$ small convex loops. This can be done continuously on $p$. It can also be done over different points as long as we have functions $t_0,\dots,t_m: L \to \NS^1$ with disjoint image; we then write $f^{[t_0\#n_0,\dots,t_m\#n_m]}$ for the resulting family. Note that this family is certainly not unique; by taking the loops small, it can be assumed to be $C^0$--close to $f$.

The following lemma summarises the facts we need about this operation:
\begin{lemma}[Little,Saldanha] \label{lem:Little}
Let $L$ be a compact CW--complex. Let $f: L \to \Imm$. Let $t,t_0,\dots,t_m: L \to \NS^1$ be homotopic functions with disjoint image. Let $n,n_0,\dots,n_m$ be positive integers. The following statements hold:
\begin{itemize}
\item[a.] If $f$ is convex, so is $f^{[t_0\#n_0,\dots,t_m\#n_m]}$.
\item[b.] Sliding the cutting points provides a homotopy between $f^{[t\#n_0+\dots n_m]}$ and $f^{[t_0\#n_0,\dots,t_m\#n_m]}$ through immersions. If $f$ is convex, they are homotopic as convex curves.
\item[c.] There is a homotopy between $f$ and $f^{[t\#2]}$ as immersions. This homotopy takes place in a small neighbourhood of the point $t$.
\item[d.] Assume that $f$ is convex. Then $f^{[t\#1]}$ and $f^{[t\#3]}$ are homotopic as convex curves. This homotopy takes place in a small neighbourhood of the point $t$.
\item[e.] The homotopies between $f^{[t\#1]}$ and $f^{[t\#3]}$ produced by Statements (c.) and (d.) are homotopic to one another through immersions, relative to the ends.
\item[f.] If $f$ is fixed and the collection $\{t_0,\dots,t_m\}$ is sufficiently dense in $\NS^1$, the curve $f^{[t_0\#1,\dots,t_m\#1]}$ can be chosen to be convex.
\end{itemize}
\end{lemma}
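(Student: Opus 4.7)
My plan is to separate the six statements into local, hands-on constructions and genuinely topological inputs drawn from Little's and Saldanha's work on the homotopy type of $\SL \subset \Imm$.

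For \emph{(a)}, the inserted pieces are convex by construction, $f$ is convex elsewhere, and the only nontrivial point is a $C^2$-smooth gluing near each cut point; this is a standard local deformation of the Frenet frame that preserves positive geodesic curvature. Statement \emph{(b)} I would prove by explicit sliding: take a continuous family of configurations in $\NS^1$ interpolating between the disjoint tuple $(t_0,\ldots,t_m)$ and the concentrated $(m+1)$-tuple at $t$, insert the prescribed loops at each instant, and let them merge when cut points collide; convexity persists by \emph{(a)}. For \emph{(c)}, Smale--Hirsch gives $\Imm \simeq \FImm \simeq \Maps(\NS^1,\SO(3))$, and a single small loop near $t$ multiplies the Frenet lift by the generator of $\pi_1(\SO(3)) \cong \Z/2$; hence two loops are formally null, and the $h$-principle produces a geometric homotopy supported in $\Op(t)$. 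For \emph{(f)}, I would place a loop of large positive geodesic curvature at each $t_i$: if $\{t_i\}$ is $\varepsilon$-dense with $\varepsilon$ smaller than the inverse of the added curvature, the total geodesic curvature of $f^{[t_0\#1,\ldots,t_m\#1]}$ is positive everywhere, and a compactness argument over $L$ makes the density choice uniform.

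Statements \emph{(d)} and \emph{(e)} are where the work of Little and Saldanha enters substantively. For \emph{(d)}, by Little's theorem \cite{Li} the space $\SL$ has exactly three connected components, distinguished by the parity of the rotation number and a further invariant; the curves $f^{[t\#1]}$ and $f^{[t\#3]}$ differ by two extra loops inside a small disc near $t$, so they share rotation parity and lie in the same component, and Little's scissor-and-reglue model then produces the desired convex homotopy supported near $t$. For \emph{(e)}, the two candidate homotopies from $f^{[t\#1]}$ to $f^{[t\#3]}$ — the $\Imm$-homotopy from \emph{(c)} applied to the extra pair of loops, and the $\SL$-homotopy from \emph{(d)} — concatenate into a based loop in $\Imm$. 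I would show this loop is null in $\pi_1(\Imm, f^{[t\#1]}) \cong \Z/2$ using Saldanha's \cite{Sal} description of the inclusion $\SL \hookrightarrow \Imm$ in low degrees.

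The main obstacle I anticipate is precisely \emph{(e)}. Because $\pi_1(\Imm) \cong \Z/2$ is nontrivial, uniqueness of homotopies in $\Imm$ is not automatic, so the coincidence of the two a priori different homotopies is not forced by \emph{(c)} and \emph{(d)} individually; it is exactly the quantitative comparison between homotopies in $\SL$ and in $\Imm$ that requires Saldanha's calculation, and this is the step where I would expect to spend the most care.
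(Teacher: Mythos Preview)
The paper does not actually prove this lemma: it treats the six statements as quotations from \cite{Sal}, Section 6 (with antecedents in \cite{Li}), and supplies only Figure~\ref{fig:Little}, an explicit convex homotopy in an affine chart between a winding-$2$ and a winding-$4$ curve, remarking that ``this homotopy can be adapted to prove Lemma~\ref{lem:Little}.'' So your proposal is already more than the paper offers, and your outlines for (a), (b), (c), (f) are sound and close in spirit to what one finds in the cited sources.

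Two points deserve tightening. For (d), invoking Little's count of the components of $\SL$ only tells you that $f^{[t\#1]}$ and $f^{[t\#3]}$ are joined by \emph{some} convex homotopy; it does not give one supported in a neighbourhood of $t$, and locality is exactly what the later applications need. The content here is the explicit local move---precisely the construction in the paper's figure---and your reference to a ``scissor-and-reglue model'' should be replaced by that hands-on homotopy rather than the $\pi_0$ statement. For (e), your plan to concatenate the two homotopies and kill the resulting loop in $\pi_1(\Imm)\cong\Z/2$ is workable, but since both homotopies from (c) and (d) are supported near $t$, it is cleaner to localize first: pass to the space of immersed (resp.\ convex) arcs in a hemisphere with fixed $1$-jet at the endpoints, where the relevant fundamental group computation is simpler and the comparison becomes the one Saldanha carries out directly in \cite{Sal}. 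Your instinct that (e) is the delicate step is correct; the global $\pi_1(\Imm)$ route will get you there, but the local model is both shorter and closer to what the references actually do.
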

These statements can be found in \cite{Sal}[Section 6], but the techniques involved appeared already in \cite{Li}. In Figure \ref{fig:Little} a explicit homotopy between a convex curve having winding 2 in an affine chart and another one having winding 4, is shown. This homotopy can be adapted to prove Lemma \ref{lem:Little}.

\begin{figure}[ht] 
\centering
\includegraphics{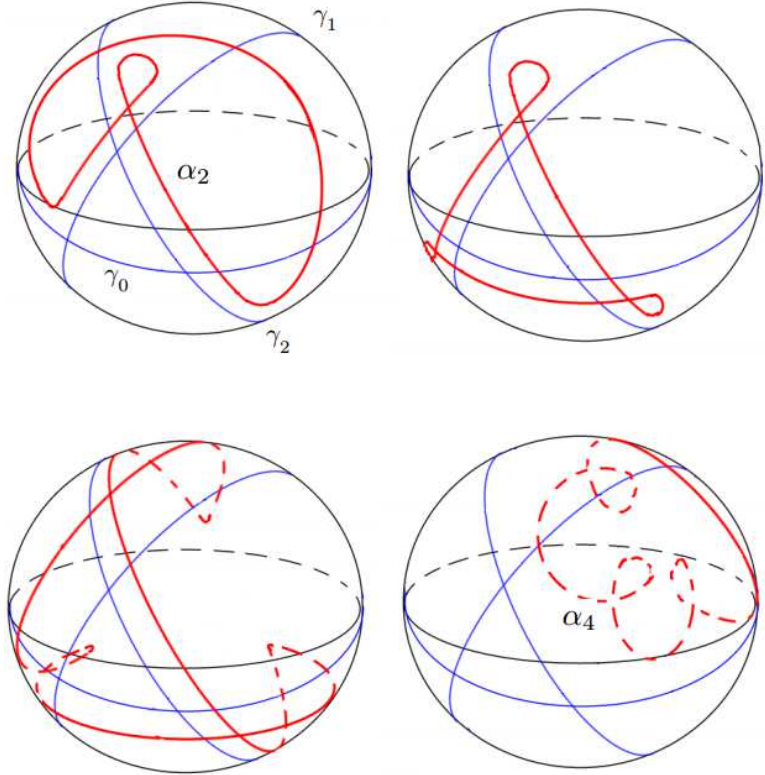}
\caption{The curves $\gamma_i$, $i=0,1,2$, are maximal circles. The curve $\alpha_2$ is convex and, in the frontal hemisphere, has winding number $2$. By pushing the upper strand down, it can be taken to the second figure. It is comprised of three segments that are convex pushoffs of the $\gamma_i$ whose corners have been rounded to preserve convexity. The third figure is obtained from the second by following the $\gamma_i$ for a longer time. Pushing everything to the opposite hemisphere yields a curve $\alpha_4$ with winding $4$.}
\label{fig:Little}
\end{figure}

\subsection{Proof of the main theorem} \label{ssec:proofMain}

The proof can be broken down in several steps. We fix an orientation of $N$.

\subsubsection{Step I. Passing to Lorentz prolongations} \label{sssec:stepI}

Let $i$ be either $0$ or $1$. Assume first that $\phi_i$ is a family of Cartan prolongations. Then, $\phi_i(x)$, $x \in K$, defines an oriented contact plane $\xi_{x,i}$ and a tautological map $f_{x,i}: N(c) \to \mathbb{S}TN$. We can take $\nu_{x,i}$ to be the unique vector field such that $(\xi_{x,i},\nu_{x,i})$ is positively oriented. Then $(f_{x,i} + \nu_{x,i})/|f_{x,i} + \nu_{x,i}|$ defines a family $\psi_i$ of Lorentz prolongations. This family is Engel homotopic to $\phi_i$. Furthermore, the choice of $\nu_{x,i}$ (as opposed to its negative), implies that when we follow the fibres of $N(c)$ positively, the curves that describe $\psi_i(x)$ are convex (i.e. having positive curvature), as opposed to concave.

Assume otherwise that $\phi_i$ is a family of Lorentz prolongations. If the curves describing it are convex, we are done. Otherwise, consider the tautological map $f_{x,i}: N(c) \to \mathbb{S}TN$ associated to $\phi_i$. There are a plane field $\xi_{x,i}$ and a vector field $\nu_{x,i}$ transverse to it so that $\phi_i$ is precisely given by $(f_{x,i} + \nu_{x,i})/|f_{x,i} + \nu_{x,i}|$. We can then find homotopies $\xi_{x,i,s}$ and $\nu_{x,i,s}$, $s \in [0,1]$, so that:
\begin{itemize}
\item $\xi_{x,i,0} = \xi_{x,i}$ and $\nu_{x,i,0} = \nu_{x,i}$, 
\item $\nu_{x,i,s}$ is transverse to $\xi_{x,i,s}$,
\item $\xi_{x,i,1}$ is an overtwisted contact structure.
\end{itemize}
Set $\phi_{i,s}$ to be the Lorentz prolongation obtained by pushing the formal Cartan prolongation of $\xi_{x,i,s}$ with $\nu_{x,i,s}$. This provides a homotopy of $\phi_i$ through Lorentz prolongations. Now, $\phi_{i,1}$ is clearly Engel homotopic to the Cartan prolongation of $\xi_{x,i,1}$ and we can apply the previous discussion. Effectively, we pass through Cartan prolongations to go from \emph{concave} curves to \emph{convex}. Let us henceforth assume that we are dealing with Lorentz prolongations $\psi_i: K \to \Engel(N(c))$, $i \in \{0,1\}$, described by convex curves.

\subsubsection{Step II. Obtaining a non--integrable homotopy}

Fix a parallelisation of $N$, and lift it to a parallelisation of $N(c)$. This provides an almost--quaternionic structure in $TN(c)$. Let $\psi_s: K \to \FEngel(N(c))$, $s \in [0,1]$, be the formal homotopy between $\psi_0$ and $\psi_1$. We can use the almost--quaternionic structure to assume that each $\psi_s$ (as a plane field) contains the fibre direction. By possibly modifying $\psi_s$, the fibre direction can be taken to be \emph{transverse} to the line field of the formal flag (this is the case for $\psi_0$ and $\psi_1$).

Suppose $c=0$. Reasoning as in Subsection \ref{ssec:key} shows that $\psi_s$ can be regarded as a map $N \times K \to \FImm$. By applying the Hirsch--Smale theorem, relative to $s=0,1$, we can assume that $\psi_s$ maps into $\Imm$. If $c$ is not zero, we can still do this over any $3$--ball in $N$. Since the Hirsch--Smale theorem is a full (in particular, relative in the parameter) $h$--principle, we can apply it sequentially using a covering of $N$. Hence, we can assume that the $\psi_s$ are non--integrable, but not necessarily maximally.

\subsubsection{Step III. The vanishing Euler class case}

We will prove Proposition \ref{prop:main} first, since the proof is simple but showcases how all the ingredients are used.

Consider the family $\psi_s$, $s \in [0,1]$. The arguments above show that in the $c=0$ case we can simply regard it as a family $\Psi: K \times N \times [0,1] \to \Imm$. Using Lemma \ref{lem:Little} (f.) we can find an even integer $m$ and a collection of points $t_0,\dots,t_m \in \NS^1$ such that the family $\Psi^{[t_0\#1,\dots,t_m\#1]}$ has image in $\SL$ after a small homotopy. However, the families $\Psi^{[t_0\#1,\dots,t_m\#1]}|_{K \times N \times \{0,1\}}$ and $\Psi|_{K \times N \times \{0,1\}}$ do not agree. 

If the turning number is at least two, the family $\Psi|_{K \times N \times \{0,1\}}$ is already of the form $\Phi^{[t\#1]}$, where $\Phi: K \times N \times \{0,1\} \to \SL$ can be understood as a family of Lorentz prolongations with turning number $c-1$. An application of Lemma \ref{lem:Little} (b.) and (d.) shows that $\Psi^{[t_0\#1,\dots,t_m\#1]}|_{K \times N \times \{0,1\}}$ and $\Psi|_{K \times N \times \{0,1\}}$ are homotopic as families of convex curves, proving the claim. \hfill $\Box$

The main point is that $c=0$ allows us to assume that $N(c)$ has a non--vanishing section (whose role is played by the point $t$). In the general case we will have to deal with this fact.

\subsubsection{Step IV. Construction of a covering}

Consider the families $\psi_0$ and $\psi_1$. By assumption, they are comprised of Lorentz prolongations with turning numbers $k_0,k_1\geq6$. Regard $N(c)$ as a principal $\NS^1$--bundle. The contractibility of the pair $(\Diff(\NS^1),\NS^1)$ implies that, after a homotopy, we can assume for $\psi_i$ to be invariant under the action of $\Z_{k_i}$ (acting by rotations on the fibre); this follows as in Lemma \ref{lem:retractions}. Let $k = \min(k_0,k_1) \geq 6$.

Given some section $\s: \SU \to N(c)$ over an open set $\SU \subset N$, denote by $I_\s$ the submanifold of $N(c)$ that, on each fibre, is given by moving from $\s$ to $e^{2\pi i/k}\s$ positively. We want to find a covering $\{\SU_j\}_{j=0,\dots,J}$ of $N$ and sections $\s_j: \SU_j \to N(c)$ such that the $I_{\s_j}$ are all disjoint.

Let $\gamma$ be a knot in $N$ representing the Poincar\'e dual of $c \neq 0 \in H^2(N,\Z)$ and let $\nu(\gamma)$ be a tubular neighbourhood. Let $\s_1$ be a section of $N(c)$ over $\nu(\gamma)$. Let $\s_0$ be a (transverse to zero) section of the disc bundle associated to $N(c)$ whose zeroes are $\gamma$; regard it as a section of $N(c)$ away from $\gamma$.

Let $(\alpha;r,\theta)$ be the coordinates in the solid torus $\NS^1 \times \D^2$; fix a diffeomorphism $\nu(\gamma) \cong \NS^1 \times \D^2$. The section $\s_1$ yields an identification of $N(c)|_{\nu(\gamma)}$ with the trivial principal $\NS^1$--bundle over $\NS^1 \times \D^2$. It can be chosen so that $\s_1$ is the constant section $1 \in \NS^1$, and $\s_0(\alpha;r,\theta) = \theta \in \NS^1$ for $r \in \Op(\{1\})$.

Fix $\delta>0$ small. Let $\SU_0$ be the union of the complement of $\nu(\gamma)$ and $\{r>1-2\delta\}$. Let $\SU_1$ be $\NS^1 \times \D^2_{1-3\delta}$. Triangulate $\partial\D^2_{1-5\delta/2}$ and use this to produce a covering $\{U_j\}_{j=2,\dots,J}$ of $\Op(\{1-4\delta < r < 1-\delta\})$ with no triple intersections. Set $\SU_j = \NS^1 \times U_j$. Since the regions $\SU_j$ can be assumed to be arbitrarily thin, the section $\s_0$ is almost constant over each one of them. Due to our assumption on the turning numbers, $I_{\s_0}$ and $I_{\s_1}$ together cover at most a third of any fiber. We deduce that for each $\SU_j$ corresponding to a vertex, we can choose $\s_j$ to be constant and satisfying the claim. Having fixed those, each $\SU_j$ corresponding to an edge intersects $\SU_0$, $\SU_1$, and two of the vertex regions; we deduce that there is some constant $\s_j$ such that $I_{\s_j}$ avoids the corresponding $I_{\s_{j'}}$.

\subsubsection{Step V. Concluding the proof.}

Assume that the family $\psi_s$ is equal to $\psi_0$ in $[0,3\rho]$ and equal to $\psi_1$ in $[1-3\rho,1]$, for $\rho>0$ small. We will modify $\psi_s$ over each $\SU_j$, inductively on $j$. The constructions that follow depend on a large integer $C$; it will be fixed at the end of the proof to ensure that our claims hold.

Write $\psi_s'$ for the family of structures obtained in the step $j-1$. Over $\Op(\SU_j)$, regard it as a family of curves $\Psi_j: \Op(\SU_j) \times K \times [0,1] \to \Imm$. Replace $\Psi_j$ by $\Psi_j^{[\s_j\#2C]}$ in $\SU_j \times K \times [0,1]$ and use the region $(\Op(\SU_j) \setminus \SU_j) \times K \times [0,1]$ to interpolate back to $\Psi_j$. We apply Lemma \ref{lem:Little} (d.) in $[0,\rho] \cup [1-\rho,1]$, (c.) in $[2\rho,3\rho] \cup [1-3\rho, 1-2\rho]$, and (e.) in $[\rho,2\rho] \cup [1-2\rho, 1-\rho]$. We do this for all $j$ and we write $\psi_s'$ for the resulting family. Lemma \ref{lem:Little} (d.) states that $\psi_i'$ and $\psi_i$, $i=0,1$, are Engel homotopic. However, $\psi_i'$ has $2C$ loops added at the points $\s_j$ over $\SU_j$. Note that the curves describing $\psi_i'$ have length bounded above independently of $C$, since the homotopies that add loops in the interpolation region can be done sequentially.

We have to further modify $\psi_s'$, again inductively on $j$. Shrink slightly the $\SU_j$ so that they remain a covering and restrict $I_{\s_j}$ to $\Op(\SU_j)$. Write $\psi_s''$ for the family of structures obtained in the step $j-1$, and let $\Psi_j: \Op(\SU_j) \times K \times [0,1] \to \Imm$ be the corresponding family of curves over $\Op(\SU_j)$. Denote by $I_{\s_j}'$ the subset of $N(c)$ obtained from $I_{\s_j}$ by enlarging it maximally (on each fibre) while keeping it disjoint from $I_{\s_{j'}}$, $j'>j$, and from itself. $I_{\s_j}$ can be enlarged fibrewise, remaining a submanifold, to cover arbitrarily much of $I_{\s_j}'$; redefine it as such. Thanks to the argument in the previous paragraph, $\Psi_j$ is of the form $F^{[\s_j\#2C]}$. Use Lemma \ref{lem:Little} (b.) to replace $\Psi_j$ by $F^{[\s_{j,1}\#1,\dots,\s_{j,2C}\#1]}$ in $\SU_j$, and interpolate back to $\Psi_j$ in $\Op(\SU_j) \setminus \SU_j$. The sections $\s_{j,i}$ are distributed in $I_{\s_j}$ so that they become dense as $C$ goes to infinity.

We write $\psi_s''$ for the resulting family after iterating over all the $\SU_j$. Now, since the $\SU_j$ cover $N$, each curve describing $\psi_s''$ is obtained from an immersed curve by adding loops at a collection of points that becomes dense with $C$; further, the length of this immersed curve is controlled. Lemma \ref{lem:Little} (f.) then implies that, for $C$ large, the curves are convex and hence the homotopy is through Engel structures. This concludes the proof. \hfill$\Box$


\begin{thebibliography}{9999}
\bibitem[CPPP]{CPPP} R. Casals, J.L. P\'erez, A. del Pino, F. Presas. \textsl{Existence h--Principle for Engel structures}. Preprint. arXiv:1507.05342.
\bibitem[El]{El} Y. Eliashberg. \textsl{Classification of overtwisted contact structures on $3$--manifolds}. Invent. Math. 92 (1989), 623--637.
\bibitem[KS]{KS} M. Klukas, B. Sahamie. \textsl{On prolongations of contact manifolds}. Proc. Amer. Math. Soc. 141 (2013), no. 9, 3257--3263.
\bibitem[Li]{Li} J.A. Little. \textsl{Nondegenerate homotopies of curves on the unit 2--sphere}. J. Diff. Geom. 4 (1970), no. 3, 339--348.
\bibitem[McD]{McD} D. McDuff. \textsl{Applications of convex integration to symplectic and contact geometry}. Ann. Inst. Fourier 37 (1987), 107--133.
\bibitem[Sal]{Sal}  N. C. Saldanha. \textsl{The homotopy type of spaces of locally convex curves in the sphere}. Geom. Topol. 19 (2015), no. 3, 1155--1203.
\bibitem[Shap]{Shap} M. Shapiro. \textsl{Topology of the space of nondegenerate curves.} Math. USSR 57 (1993) 106--126.
\bibitem[Th]{Th} R. Thom. \textsl{L'homologie des espaces fonctionnels}. Colloque de topologie alg\'ebrique, Louvain (1956), 29--39.
\bibitem[Vo]{Vo} T. Vogel. \textsl{Existence of Engel structures}. Ann. of Math. (2) 169 (2009), no. 1, 79--137.
\end{thebibliography}
\end{document}